\newcommand{\ns}{{\mathbb N}} 
\newcommand{\zs}{{\mathbb Z}} 
\newcommand{\qs}{{\mathbb Q}}  
\newcommand{\bu}{\bar u}
\newcommand{\Sn}{{\mathfrak S}}
\newcommand{\GK}{\mathbb{K}}
\DeclareMathOperator{\DR}{DR}
\newcommand{\tG}{\tilde G}
\newcommand{\cT}{\mathcal T}
\newcommand{\p}{permutation}
\newcommand{\figeps}[3]
{\begin{figure}[ht!]
\begin{center} 
\includegraphics[width=#1cm]{#2.eps}\caption{#3}\label{fig:#2} 
\end{center}
\end{figure}}
\newtheorem{Theorem}{Theorem}
\newtheorem{Proposition}[Theorem]{Proposition}
\newtheorem{Lemma}[Theorem]{Lemma}
\theoremstyle{Definition}
\newtheorem{Definition}[Theorem]{Definition}
\newcommand{\beq}{\begin{equation}}
\newcommand{\eeq}{\end{equation}}
\newcommand{\gf}{generating function}
\newcommand{\gfs}{generating functions}
\newcommand{\fps}{formal power series}
\def\emm#1,{{\em #1}}
\def\section{\@startsection{section}{1}%
 \z@{.7\linespacing\@plus\linespacing}{.5\linespacing}%
 {\normalfont\bfseries\scshape\centering}}
\def\subsection{\@startsection{subsection}{2}%
  \z@{.5\linespacing\@plus\linespacing}{.5\linespacing}%
  {\normalfont\bfseries\scshape}}
\def\subsubsection{\@startsection{subsubsection}{3}%
 \z@{.5\linespacing\@plus\linespacing}{-.5em}
  {\normalfont\bfseries\itshape}}
\def\cT{\mathcal{T}}
\def\cTn{\cT_n}
\newcommand{\spacebreak}
{\begin{displaymath} \triangleleft \; \lhd \;
\diamond \; \rhd \; \triangleright
  \end{displaymath}}
\begin{document}
\title
[Tamari lattices and parking functions]
{Tamari lattices and parking functions:\\
proof of a conjecture of F. Bergeron}

\author[M. Bousquet-M\'elou]{Mireille Bousquet-M\'elou}
\author[G. Chapuy]{Guillaume Chapuy}
\author[L.-F. Préville-Ratelle]{Louis-François Préville-Ratelle}

\address{MBM: CNRS, LaBRI, Universit\'e Bordeaux 1, 
351 cours de la Lib\'eration, 33405 Talence, France}
\email{mireille.bousquet@labri.fr}
\address{GC: CNRS, LIAFA, Universit\'e Paris Diderot - Paris 7, Case 7014,
75205 Paris Cedex 13, France}
\email{guillaume.chapuy@liafa.jussieu.fr}
\address{LFPR: LACIM, UQAM, C.P. 8888 Succ. Centre-Ville, Montréal H3C 3P8, Canada}
\email{preville-ratelle.louis-francois@courrier.uqam.ca}
%

\thanks{GC and LFPR were partially supported by the European project
  ExploreMaps -- ERC StG 208471. LFPR is also partially supported by an
Alexander Graham Bell Canada Graduate Scholarship from the Natural
  Sciences and Engineering Research Council of Canada (NSERC)}

\keywords{Enumeration --- Lattice paths --- Tamari lattices --- Parking functions}
\subjclass[2000]{05A15}

\begin{abstract}
An {$m$-ballot path} of size $n$ is a path  on the square grid
consisting of north and east unit steps, starting at
$(0,0)$,  ending at $(mn,n)$, and never going below the line
$\{x=my\}$. The set of these paths can be equipped with a lattice structure,
called the $m$-Tamari lattice and denoted by $\cTn^{(m)}$, which
generalizes the usual Tamari  
lattice $\cTn$ obtained when $m=1$. This lattice was  
introduced by F.~Bergeron   in connection with the
study of coinvariant spaces. He conjectured several intriguing
formulas dealing with the enumeration of intervals in this
lattice. One of them states that the number of intervals in
$\cTn^{(m)}$ is
$$
\frac {m+1}{n(mn+1)} {(m+1)^2 n+m\choose n-1}.
$$
This conjecture was proved recently, but in a non-bijective way, while
its form strongly suggests a connection with plane trees.

 Here, we prove another conjecture of Bergeron, which deals with the
 number of \emm labelled, intervals. An interval $[P,Q]$ of 
$\cTn^{(m)}$ is \emm labelled, if the north steps of $Q$ are labelled
from 1 to $n$ in such a way the labels increase along any sequence of
consecutive 
north steps. We prove that the number of  labelled intervals in
$\cTn^{(m)}$ is 
$$
{(m+1)^n(mn+1)^{n-2}}.
$$
The form of these numbers suggests a connection with parking functions,
but our proof is non-bijective. It is based on a recursive description of
intervals, which translates into a functional equation satisfied by
the associated \gf. This equation involves a derivative and a divided
difference, taken with respect to two additional variables. Solving
this equation is the hardest part of the paper.

Finding a bijective proof remains an open problem.
\end{abstract}

\date{\today}
\maketitle


\section{Introduction and main results}
An \emph{$m$-ballot path} of size $n$ is a path  on the square grid
consisting of north and east unit steps, starting at
$(0,0)$,  ending at $(mn,n)$, and never going below the line
$\{x=my\}$. 
It is well-known that  there are 
$$\frac
1{mn+1}{(m+1)n \choose n}$$ such paths~\cite{dvoretzky}, and that they
are in bijection with $(m+1)$-ary trees with $n$ inner nodes.  
Bergeron recently defined on the  set $\cT_n^{(m)}$ of $m$-ballot
paths of size $n$ a partial order.
It is convenient to  describe it via the associated covering relation,
 exemplified in Figure~\ref{fig:push_mWalk}.
\begin{Definition}
\label{def-m-tamari}
Let $P$ and $Q$ be two $m$-ballot paths of size $n$.
 Then $ Q$ covers $P$ if  there exists in  $P$
 an east step $a$, followed by a north step $b$, such that $Q$ is
 obtained from $P$ by swapping $a$ and $S$, 
 where $S$ is the shortest factor of $P$ that begins with $b$ and is
 a (translated) $m$-ballot path.
\end{Definition}

\figeps{12}{push_mWalk}{The covering relation  between
  $m$-ballot paths ($m=2$).}

It was shown in~\cite{bousquet-fusy-preville} that
this order endows $\cT_n^{(m)}$ with a lattice
structure, which is called the \emm $m$-Tamari lattice of size
$n$,.   When $m=1$,  it coincides with the classical Tamari
lattice~\cite{BeBo07,friedman-tamari,HT72,knuth4}. Figure~\ref{fig:lattice_ex} 
shows two of the lattices $\cT_n^{(m)}$.
\begin{figure}[b]
\begin{center}
\includegraphics[height=9cm]{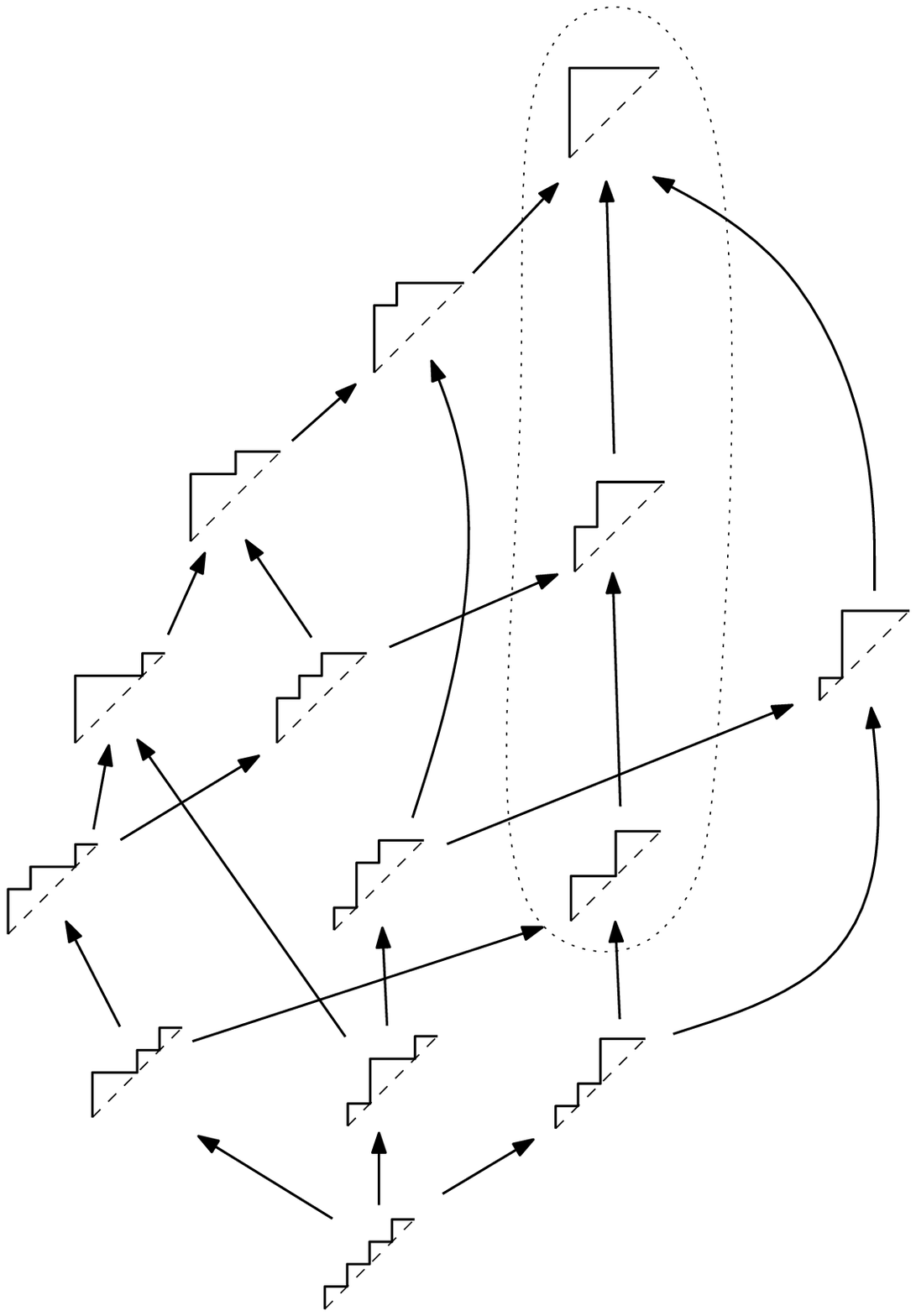}\hspace{10mm}\includegraphics[height=9cm]{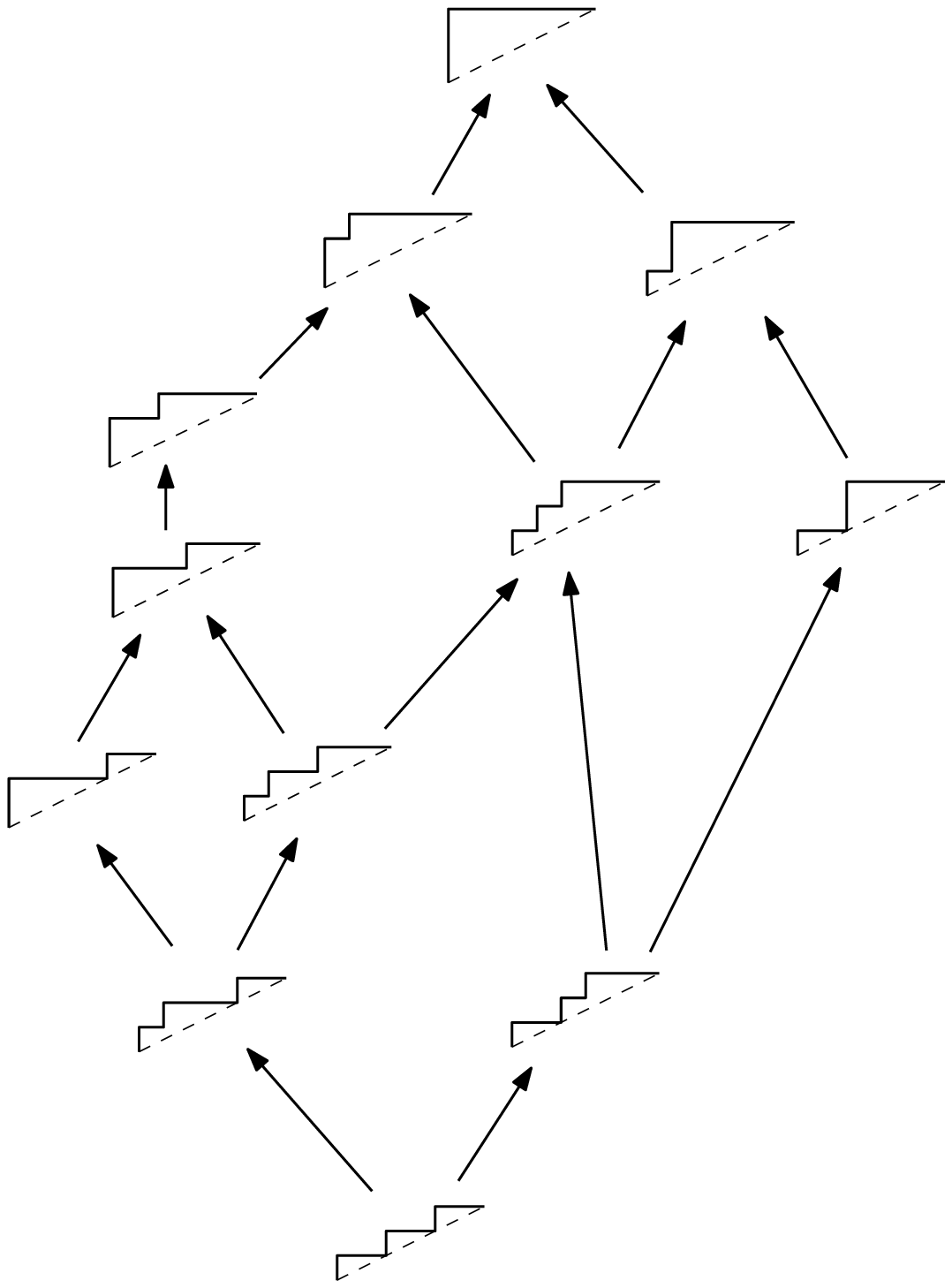}
\end{center}
\caption{The $m$-Tamari lattice $\cT_{n}^{(m)}$ for  $m=1$ and $n=4$ (left)
and for $m=2$ and $n=3$ (right). 
The three walks surrounded by a line in $\cT_{4}^{(1)}$ form a lattice
that is isomorphic to $\cT_{2}^{(2)}$ (see Proposition~\ref{prop:sublattice}).}
\label{fig:lattice_ex}
\end{figure}

These lattices
are conjectured to have deep connections with the ring $\DR_{3,n}$ of polynomials in
three sets of variables $X=\{x_1, \ldots, x_n\}$, $Y=\{y_1, \ldots,
y_n\}$, $Z=\{z_1, \ldots, z_n\}$, quotiented by the ideal generated by
(trivariate) \emm diagonal invariants,. By diagonal invariants, one means constant term free
polynomials that are invariant under the following action of the
symmetric group $\Sn_n$: for $\sigma \in \Sn_n$ and $f$ a polynomial,
$$
\sigma(f(X,Y,Z))=f(x_{\sigma(1)}, \ldots, x_{\sigma(n)},y_{\sigma(1)}, \ldots,
y_{\sigma(n)},z_{\sigma(1)}, \ldots, z_{\sigma(n)}).
$$
We refer to~\cite{bergeron-preville,bousquet-fusy-preville} for details
about these conjectures, which have striking analogies with the much
studied case of \emm two, sets of
variables~\cite{haglund-book,haglund-polynomial,HaimanPreu,HaiConj,loehr-thesis}. 
In particular, it seems that the role played by ballot paths for two
sets of variables (see, \emm
e.g.,,~\cite{MR1935784,MR1972636,MR2163448}) 
is played for three sets of variables by \emm intervals, of ballot paths in the
Tamari order.

For instance, it is conjectured in~\cite{bergeron-preville} that the
 the dimension of a certain polynomial ring related to  $\DR_{3,n}$,
 but involving one more parameter $m$, is
\beq\label{number-unlabelled}
\frac {m+1}{n(mn+1)} {(m+1)^2 n+m\choose n-1},
\eeq
and that this number counts intervals in the Tamari lattice
$\cT_n^{(m)}$ . 
The latter statement was proved
in~\cite{bousquet-fusy-preville} (the special case $m=1$ had been proved
earlier~\cite{ch06}). The former one is presumably extremely 
difficult, given the complexity of the corresponding result for two
sets of variables~\cite{HaimanPreu}. 
The dimension related result was observed earlier for small values of $n$ by
Haiman~\cite{HaiConj} in the case $m=1$. 

\figeps{4}{labelled}{A labelled $2$-ballot path.}

The aim of this paper is to prove another conjecture
of~\cite{bergeron-preville}, dealing with \emm labelled, Tamari
intervals. Let us say that an $m$-ballot path of size 
$n$ is \emm labelled, if the north steps are labelled from 1 to $n$,
in such a way the labels increase along any sequence of consecutive
north steps (Figure~\ref{fig:labelled}). The number of labelled
$m$-ballot paths of size $n$ is
$$
(mn+1)^{n-1}.
$$
Indeed, these paths are  in bijection with  $(1,m,\ldots,
m)$\emm -parking functions, of size $n$, in the sense 
of~\cite{pitman-stanley,yan}: the function $f$ associated with a  path $Q$ satisfies
$f(i)=k$ if the north step of $Q$ labelled $i$ lies at abscissa
$k-1$. Now, we say that an
$m$-Tamari  interval $[P,Q]$ is  labelled, if the upper path $Q$ is
labelled. It is conjectured in~\cite{bergeron-preville} that the
number of labelled $m$-Tamari intervals of size $n$ is 
\beq\label{number}
{(m+1)^n(mn+1)^{n-2}},
\eeq
and this is what we prove in this paper. It is also
conjectured in~\cite{bergeron-preville} that this number is the
dimension of a certain polynomial ring generalizing  $\DR_{3,n}$
(which corresponds to the case $m=1$). 

Our  proof is, at first blush, analogous to the proof
of~\eqref{number-unlabelled} presented in~\cite{bousquet-fusy-preville}: we introduce  a
\gf\ $F^{(m)}(t;x,y)$ counting labelled intervals according to three
parameters; we describe a
recursive construction of intervals and  translate it into a functional
equation defining $F^{(m)}(t;x,y)$ ;  we finally solve this equation,
after having partially guessed  its solution. However,  the labelled
case turns out to be significantly more 
difficult than the unlabelled one. It is not hard to explain the origin of this
increased difficulty: for $m$ fixed,
the \gf \ of the numbers~\eqref{number-unlabelled} is an \emm
algebraic, series, and can be expressed in terms of the series $Z\equiv
Z(t)$ satisfying 
$$
Z=\frac t{(1-Z)^{m(m+2)}}.
$$
There exists a wealth of tools, both modern or ancient, to handle
algebraic series (\emm e.g.,, factorisation, elimination, Gröbner bases,
rational parametrizations when the genus is zero, efficient guessing
techniques, all tools made
effective in  {\sc Maple} and its packages, like {\tt algcurves} and {\tt
  gfun}). Such tools play a key role in the proof
of~\eqref{number-unlabelled}. But the \gf\ of the
numbers~\eqref{number} is related to the series $Z$ 
satisfying
$$
Z=t e^{m(m+1)Z},
$$
which lives in the far less polished world of \emm differentially algebraic,
series, for which much fewer tools are available.

\medskip

\begin{figure}[t]
  \begin{center}
    \includegraphics{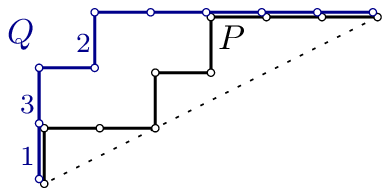}
  \end{center}
  \caption{A labelled $2$-Tamari interval $I=[P,Q]$ of size $|I|=3$. It has $c(P)=3$
  contacts, and its initial rise is $r(Q)=2$.}\label{fig:interval}
\end{figure}

Our main result is actually more general that~\eqref{number}. Indeed, we refine
the enumeration by taking into account two more parameters, which we now
define. 
A \emph{contact} of an $m$-ballot path $P$ is a vertex of $P$ lying
on the line $\{x=my\}$. 
The \emph{initial rise} of a ballot path $Q$ is the length of the initial run of up steps in $Q$.
A \emm contact, of a
Tamari interval $[P, Q]$ is a contact of the \emm lower, path~$P$,
 while the \emm initial rise, of this interval  is the initial rise of
 the \emm upper, path $Q$ (see 
 Figure~\ref{fig:interval}). 
 We consider the exponential \gf\ of
 labelled $m$-Tamari intervals, counted by the size, the number of
 contacts and the initial rise. More precisely,
\beq\label{F-def}
F^{(m)}(t;x,y)= \sum_{I=[P,Q]} \frac{t^{|I|}}{|I|!}\, x^{c(P)}y^{r(Q)},
\eeq
where the sum runs over all labelled $m$-Tamari intervals $I$, $|I|$
denotes the size of $I$ (that is, the number of up steps in $P$), $c(P)$ the
number of contacts of $P$ and $r(Q)$ the initial rise of $Q$.
The main result of this paper is a complicated closed form expression of
$F^{(m)}(t;x,y)$, which becomes  simple when $y=1$. In particular,
extracting the $n^{\hbox{th}}$ 
coefficient in   $F^{(m)}(t;1,1)$ proves Bergeron's
conjecture~\eqref{number}.
\begin{Theorem}\label{thm:main}
Let  $F^{(m)}(t;x,y)\equiv F(t;x,y)$ be the exponential generating
function of labelled $m$-Tamari intervals, defined by~\eqref{F-def}. Let $z$ and
$u$ be two indeterminates, and write 
\beq\label{t-x-param}
t=z e^{-m(m+1)z}
\quad \hbox{and } \quad x=({1+u})e^{-mzu}.
\eeq
Then $F(t;x,1)$ becomes a series in $z$ with polynomial coefficients in $u$, and this series has a simple expression:
\beq\label{Fx1}
F(t;x,1)= (1+u)e^{(m+1)z-(m-1)zu}\left(1+\frac{1-e^{mzu}}u\right).
\eeq
In particular, 
$$F(t;1,1)=(1-mz)e^{(m+1)z},$$
  and the number of labelled $m$-Tamari intervals  
of size $n$ is 
$$
n! [t^n] F(t;1,1)={(m+1)^n(mn+1)^{n-2}}.
$$
\end{Theorem}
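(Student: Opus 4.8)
\medskip
\noindent\textbf{Proof strategy.}

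The plan is to reproduce, at the level of strategy, the proof of~\eqref{number-unlabelled} from~\cite{bousquet-fusy-preville}, while absorbing the extra combinatorial content carried by the labels. First I would establish a recursive description of labelled $m$-Tamari intervals: starting from $I=[P,Q]$, one reads off the structure at the successive contacts of the lower path~$P$ and along the initial run of north steps of the upper path~$Q$, and peels off the first block of this decomposition, leaving one or several smaller labelled intervals together with a bounded amount of extra data. Since $F$ in~\eqref{F-def} is an \emph{exponential} generating function, the assignment of the labels $\{1,\dots,n\}$ to the various pieces is unconstrained apart from the requirement that labels increase along maximal runs of north steps; the free part contributes multinomial coefficients that merge into products of series, while the run constraint is exactly what forces the decomposition to remember the length of the initial run of $Q$ (the variable $y$) and the number of contacts of $P$ (the variable $x$).

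Next I would turn this decomposition into a functional equation for $F\equiv F(t;x,y)$. Summing over the size of the block that is removed produces a derivative with respect to one catalytic variable, while the usual catalytic-variable mechanism for lattice-path problems produces a divided difference with respect to the other, of the shape $\bigl(xF(t;x,y)-F(t;1,y)\bigr)/(x-1)$ (or its analogue in $y$). One then checks that, in the ring of formal power series in $t$ with polynomial coefficients in $x$ and $y$, this equation has a unique solution, so that exhibiting any series satisfying it settles the matter.

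Solving the equation is the hard part, and the one the authors single out. Since~\eqref{number} is built from powers of $m+1$ and $mn+1$, the natural move is the substitution $t=z\,e^{-m(m+1)z}$ of~\eqref{t-x-param}, the change of variable attached to the tree series $Z=t\,e^{m(m+1)Z}$ mentioned in the introduction; experimenting with a few coefficients then suggests the further change of catalytic variable $x=(1+u)\,e^{-mzu}$, after which $F(t;x,1)$ should collapse to the closed form~\eqref{Fx1}. The delicate point is that one cannot merely put $y=1$ in the functional equation: the $y$-dependent operator does not disappear there and brings in the new unknown $\partial_y F(t;x,1)$. So one has to solve the equation for general $y$ first --- this is the ``complicated'' expression for $F(t;x,y)$ promised just before the theorem --- or find an auxiliary relation that pins down $\partial_y F(t;x,1)$, and only then specialise. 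I expect essentially all of the technical weight of the paper to live here: one is handling a differentially algebraic series under the combined action of a derivative and a divided difference, with none of the elimination, Gröbner-basis or rational-parametrisation machinery available in the algebraic setting. Once~\eqref{Fx1} has been checked against the equation, uniqueness finishes its proof (and the stated form of $F(t;1,1)$ follows by letting $u\to0$, whence $x\to1$ and $\bigl(1-e^{mzu}\bigr)/u\to-mz$).

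The enumerative corollary is then a one-line computation. With $F(t;1,1)=(1-mz)e^{(m+1)z}$ and $t=z\,e^{-m(m+1)z}$, Lagrange inversion applied to $z=t\,e^{m(m+1)z}$ gives
\begin{align*}
n!\,[t^n]F(t;1,1)
&=(n-1)!\,[z^{n-1}]\Bigl(\tfrac{d}{dz}\bigl[(1-mz)e^{(m+1)z}\bigr]\Bigr)e^{m(m+1)nz}\\
&=(n-1)!\,[z^{n-1}]\bigl(1-m(m+1)z\bigr)e^{(m+1)(mn+1)z}\\
&=(m+1)^2\bigl((m+1)(mn+1)\bigr)^{n-2}=(m+1)^n(mn+1)^{n-2},
\end{align*}
which is Bergeron's conjecture~\eqref{number}.
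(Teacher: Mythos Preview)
Your outline matches the paper's architecture: the recursive decomposition does yield a functional equation mixing $\partial_y$ with an iterated divided difference in $x$ (Proposition~\ref{prop:eq}), the substitutions~\eqref{t-x-param} are the right ones, and your Lagrange-inversion computation at the end is correct line by line. The gap is exactly where you say it is, but your proposal does not close it --- you only name the difficulty --- and the paper's solution rests on two concrete ideas that are absent from your sketch.

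First, one does not attack the nonlinear equation for $F(x,y)$ head-on. Instead one \emph{linearises} it by replacing the unknown factor $F(x,1)$ on the right-hand side of~\eqref{eq:Fb} by its conjectured value~\eqref{Fx1}; this yields a linear equation~\eqref{eq:Gtilde} for an auxiliary series $\tilde G(u,y)$, and the whole proof reduces to the \emph{a posteriori} check $\tilde G(u,1)=G_1(u)$, which closes the bootstrap. Second --- and this is the step with real content --- the linear equation is solved via a hidden symmetry: after the change of variables, the coefficient of $\tilde G(u,y)$ depends on $u$ only through $v=(1+u)^{m+1}u^{-m}$, hence is invariant under $u\mapsto u_i$ for each of the $m+1$ roots of $(1+U)^{m+1}=U^m v$. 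A Lagrange-interpolation combination $\sum_i \tilde G(u_i,y)/\prod_{j\ne i}(A_i-A_j)$ then annihilates the $m$ unknown series thrown off by the iterated divided difference (via~\eqref{eq:Lagrange-poly}) and collapses to a homogeneous first-order ODE in $y$, integrated in Proposition~\ref{prop:combi-lin-m}. The final check $\tilde G(u,1)=G_1(u)$ is a short calculation using~\eqref{eq:Lagrange-inv-bis}. Without the guess-and-linearise bootstrap and the root-symmetry cancellation, there is no visible way to carry out the step you leave open; your ``solve for general $y$ first, or pin down $\partial_y F(t;x,1)$'' does not yet suggest either mechanism.
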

Our expression of $F^{(m)}(t;x,y)$ is given in Theorem~\ref{thm:trivariate}. When
$m=1$, it takes a reasonably simple form, which we now present (the
case $m=2$ is also detailed at the end of the paper). Given
a Laurent polynomial $P(u)$ 
in $u$, we denote by $[u^\ge]P(u)$ the \emm non-negative part of
$P(u)$ in $u$,, defined by
$$
[u^\ge]P(u) = \sum_{i\ge 0} P_iu^i \quad \hbox{ if } \quad P(u)= \sum
_{i\in \zs} P_i u^i.
$$
The definition is then extended by linearity to power series
whose coefficients are Laurent polynomials in $u$.
\begin{Theorem}\label{thm:1}
Let  $F^{(1)}(t;x,y)\equiv F(t;x,y)$ be the generating function of
labelled $1$-Tamari intervals, defined by~\eqref{F-def}.   Let $z$ and
$u$  be two indeterminates, and set 
\beq\label{t-x-param-1}
t=z e^{-2z}
\quad \hbox{and } \quad x=({1+u})e^{-zu}.
\eeq
Then $F(t;x,y)$ becomes a formal power series in $z$ with polynomial coefficients
in $u$ and $y$,  which is given by
\beq\label{F-param-y1}
F(t;x,y)= (1+u)\,e^{2yz}\ [u^{\ge}] \Big(e^{zu(y-1)+zy\bu} - \bu e^{z\bu(y-1)+zyu} \Big),
\eeq
with $\bu=1/u$.
Equivalently,
$$
\frac{F(t;x,y)}{ (1+u)e^{2yz}}= \sum_{0\le i \le j} u^{j-i} \frac{z^{i+j}y^i (y-1)^j}{i! j!}
- 
\sum_{0\le j < i} u^{i-j-1} \frac{z^{i+j}y^i (y-1)^j}{i! j!}.
$$
\end{Theorem}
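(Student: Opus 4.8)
The plan is to derive Theorem~\ref{thm:1} as a specialization of the general trivariate result, Theorem~\ref{thm:trivariate}, which in turn rests on a functional equation for $F^{(m)}(t;x,y)$ obtained from a recursive decomposition of $m$-Tamari intervals. So the first step is to set up that decomposition: given a labelled $m$-Tamari interval $I=[P,Q]$, one looks at the structure of the lower path $P$ at its contacts with the line $\{x=my\}$, peeling off the first ``primitive'' component. Because $Q$ lies above $P$ in the Tamari order, removing the corresponding portion of $Q$ must be done compatibly, and the labels of $Q$ must be redistributed; the exponential generating function absorbs the label bookkeeping as a binomial convolution. The initial rise $r(Q)$ and the contacts $c(P)$ are precisely the statistics that make this recursion close, and the outcome is a functional equation for $F=F^{(m)}(t;x,y)$ involving $F(t;1,y)$, a $y$-derivative (coming from the rise) and a divided difference in $x$ (coming from the catalytic role of the contact variable). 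This is standard ``adding a slice''/``kernel method with two catalytic variables'' territory, and I would simply quote the equation from the earlier section.

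Next I would solve the functional equation. The announced change of variables $t=ze^{-m(m+1)z}$, $x=(1+u)e^{-mzu}$ is the substitution that trivializes the $x$-catalytic variable: under it, $F$ becomes a power series in $z$ with polynomial coefficients in $u$ (and $y$), and the divided difference in $x$ turns into a divided difference in $u$, which in the non-negative-part formalism $[u^{\ge}]$ becomes tractable. The strategy is the usual one for such D-algebraic equations: guess the form of the solution (here an expression of the shape $(1+u)e^{cyz}[u^{\ge}]\big(\text{sum of two exponentials symmetric under }u\leftrightarrow\bar u\big)$), then verify that it satisfies the functional equation by plugging in and checking that the $y$-derivative and the $u$-divided-difference act on the ansatz in the predicted way. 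For $m=1$ one then reads off~\eqref{F-param-y1}; expanding the two exponentials $e^{zu(y-1)+zy\bar u}$ and $\bar u\,e^{z\bar u(y-1)+zyu}$ as double power series in $z$, applying $[u^{\ge}]$ term by term (which kills all monomials $u^{j-i}$ with $j<i$ in the first sum and keeps $u^{i-j-1}$ with $i>j$ in the second), and multiplying by $(1+u)e^{2yz}$ gives the equivalent explicit double-sum formula.

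The main obstacle is the solution of the functional equation — as the authors themselves flag, ``solving this equation is the hardest part of the paper.'' Unlike the unlabelled case~\eqref{number-unlabelled}, the series is not algebraic, so the standard algebraic toolkit (elimination, Gröbner bases, rational parametrization) is unavailable; one is working with a differentially algebraic series governed by $Z=te^{m(m+1)Z}$, and the proof must be largely bare-hands. Concretely, the difficulty is twofold: (i) finding the right parametrization and the right ansatz — in particular recognizing that $F(t;x,y)$, though a priori only a power series in $t$, becomes \emph{polynomial} in $u$ after the substitution, which is what makes $[u^{\ge}]$ meaningful and the whole manipulation legitimate; and (ii) verifying the ansatz, which requires carefully tracking how $\partial_y$ and the $u$-divided difference interact with the $[u^{\ge}]$ operator and with the symmetry $u\leftrightarrow\bar u$. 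Once $F(t;x,y)$ is known in general (Theorem~\ref{thm:trivariate}), the $m=1$ statement is just substitution and rearrangement, and setting $y=1$ further collapses~\eqref{F-param-y1} to~\eqref{Fx1} and then to $F(t;1,1)=(1-mz)e^{(m+1)z}$; extracting $n!\,[t^n]$ via Lagrange inversion on $t=ze^{-m(m+1)z}$ yields $(m+1)^n(mn+1)^{n-2}$, proving Bergeron's conjecture.
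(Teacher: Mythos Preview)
Your proposal has the right ingredients—the functional equation from the recursive decomposition, the change of variables~\eqref{t-x-param-1}, the role of $[u^{\ge}]$, and the $u\leftrightarrow\bar u$ symmetry—but the mechanism you describe is not the one the paper uses, and the difference matters.

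You frame the argument as ``guess the full trivariate answer, then verify it satisfies the functional equation''. The paper does \emph{not} do this. Instead, it guesses only the univariate specialization $G_1(u):=G(u,1)$ (given by~\eqref{tG1}), substitutes this conjectured value into the functional equation~\eqref{eq:G} in place of the unknown $G(u,1)$, and studies the resulting \emph{linear} equation~\eqref{eq:Gtilde} for an auxiliary series $\tilde G(u,y)$. For $m=1$ this reads
\[
\frac{\partial \tilde G}{\partial y}(u,y)=z(1+u)(1+\bar u)\bigl(\tilde G(u,y)-\tilde G(0,y)\bigr).
\]
The $u\leftrightarrow\bar u$ symmetry enters here, not in the ansatz: since the coefficient $z(1+u)(1+\bar u)$ is invariant, subtracting the equation at $\bar u$ kills the catalytic term $\tilde G(0,y)$ and leaves a \emph{homogeneous} ODE in $y$ for $\tilde G(u,y)-\tilde G(\bar u,y)$, solved immediately by~\eqref{tGsol1}. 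One then recovers $\tilde G(u,y)$ by taking $[u^{>}]$ and using $\tilde G(-1,y)=0$ to pin down the constant term, obtaining~\eqref{Gsol-1}. The final step is to set $y=1$ in~\eqref{Gsol-1} and check that the result equals the guessed $G_1(u)$; this closes the loop and proves $\tilde G=G$, whence~\eqref{F-param-y1}.

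So the logical structure is: guess $G(u,1)$ $\Rightarrow$ linearize $\Rightarrow$ solve explicitly $\Rightarrow$ verify the guess at $y=1$. Your ``guess the whole thing and plug in'' would require computing $F(x,1)\cdot\Delta F(x,y)$ with the full formula~\eqref{F-param-y1}, which is considerably messier and is not what the paper does. Also, deriving Theorem~\ref{thm:1} as a specialization of Theorem~\ref{thm:trivariate} is backwards relative to the paper: Section~\ref{sec:sol1} treats $m=1$ directly as a warm-up, and Section~\ref{sec:sol} then generalizes the same three steps (homogeneous ODE via the $m+1$ roots $u_i$, reconstruction by positive-part extraction, verification at $y=1$).
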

It is easily seen that the case $y=1$ of the above formula reduces to
the case $m=1$ of~\eqref{Fx1}. When $x=1$, that is, $u=0$, the double
sums in the expression of $F(t;x,y)$ reduce to simple sums, and the
\gf\ of labelled Tamari intervals is expressed in terms of Bessel functions:
$$
\frac{F(t;1,y)}{ e^{2yz}}= \sum_{i\ge 0 }  \frac{z^{2i}y^i (y-1)^i}{i!^2}
- 
\sum_{ j\ge 0 } \frac{z^{2j+1}y^{j+1} (y-1)^j}{(j+1)! j!}.
$$

\medskip
The outline of the paper goes as follows: in Section~\ref{sec:eq} we
derive from a recursive description  of labelled Tamari intervals a
functional equation satisfied by their \gf. This equation involves a
derivative (with respect to $y$) and a divided difference (with
respect to $x$). We present in Section~\ref{sec:sol1} the principle of the
proof, and exemplify it on the case $m=1$, thus obtaining
Theorem~\ref{thm:1} above. Section~\ref{sec:sol} deals with the general
case, and proves Theorem~\ref{thm:main}. 

\medskip
We conclude this introduction with some notation and a few
definitions.
Let $\GK$ be a commutative ring and $t$ an indeterminate. We denote by
$\GK[t]$ 
(resp. $\GK[[t]]$) 
the ring of polynomials 
(resp. \fps) 
in $t$ with coefficients in $\GK$. If $\GK$ is a field, then $\GK(t)$ denotes the field
of rational functions in $t$, and $\GK((t))$ the field of Laurent series
in $t$ (that is, series of the form $\sum_{n\ge n_0} a_n t^n$, with $n_0\in \zs$). These notations are generalized to polynomials, fractions
and series in several indeterminates. We 
denote by bars the reciprocals of variables: for instance, $\bu=1/u$, so that $\GK[u,\bu]$ is the ring of Laurent
polynomials in $u$ with coefficients in $\GK$.
The coefficient of $u^n$ in a Laurent  series $F(u)$ is denoted
by $[u^n]F(x)$.

We have defined the non-negative part of a Laurent polynomial $P(u)$
above Theorem~\ref{thm:1}. We define similarly the positive
part of $P(u)$, 
 denoted by $[u^>]P(u)$. 

The series we  handle in this paper involve a main variable $t$, or $z$
after the change of variables~\eqref{t-x-param}, and then  additional
variables $x$ and $y$. So they should in principle be denoted
$F(t;x,y)$, but  we often omit the variable $t$ (or $z$), to
avoid heavy notation and enhance role of the additional variables $x$
and $y$.

\section{A functional equation}
\label{sec:eq}

The aim of this section is to describe a recursive decomposition of labelled
$m$-Tamari intervals, and to translate it into a functional equation satisfied
by the associated generating function. Our description of the decomposition is
self-contained, but we refer to~\cite{bousquet-fusy-preville} 
for several proofs and details.

\subsection{Recursive decomposition of Tamari intervals}
We start by modifying the appearance of 1-ballot paths. 
We apply a 45 degree rotation to 1-ballot paths to transform them into \emm
Dyck paths,.  A Dyck path of size $n$ consists  of steps $u=(1,1)$ (up steps) and
steps $d=(1,-1)$ (down steps), starts at $(0,0)$, ends at $(2n,0)$ and never goes
below the $x$-axis.  We say that an up step has \emm rank, $i$ if it is the
$i^{\hbox{\small th}}$ up step of the path. We often represent Dyck
paths by words on the alphabet $\{u,d\}$.

Consider now an $m$-ballot path of size $n$, and  replace each north step
 by a sequence of $m$ north steps. This gives a 1-ballot path of size $mn$, and
 thus, after a rotation, a Dyck path. In this path, for each
 $i\in\llbracket 0,n-1\rrbracket$, 
the up steps of ranks $mi+1,\ldots,m(i+1)$
are consecutive. We call the Dyck paths satisfying this property
\emph{$m$-Dyck paths}, and say that the up steps
of ranks $mi+1,\ldots,m(i+1)$ form a \emph{block}. Clearly, $m$-Dyck
paths of size $mn$ (\emm i.e.,, having $n$ blocks) are in
one-to-one correspondence with $m$-ballot paths of size $n$.
We often denote by $\cT_n$, rather than $\cT_n^{(1)}$, the usual
Tamari lattice of size $n$. Similarly, the intervals of this lattice are called
Tamari intervals, rather than 1-Tamari intervals.
As proved in~\cite{bousquet-fusy-preville}, the 
transformation of $m$-ballot paths into $m$-Dyck paths maps $\cT_n^{(m)}$ on a sublattice of $\cT_{mn}$.
\begin{Proposition}[{\cite[Prop.~4]{bousquet-fusy-preville}}]
\label{prop:sublattice}
The set of $m$-Dyck paths with $n$ blocks is the sublattice of
$\mathcal{T}_{nm}$ consisting of the paths that are larger than or
equal to $u^m d^m \ldots u^m d^m$. It is
order isomorphic to $\mathcal{T}_{n}^{(m)}$. 
%
\end{Proposition}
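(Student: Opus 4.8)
The plan is to verify the three assertions separately: first, that the image of $m$-ballot paths under the ``blow-up'' map (replace each north step by a block of $m$ north steps, then rotate) is exactly the set of Dyck paths that are $\ge w_0 := u^md^m\cdots u^md^m$ in $\cT_{mn}$; second, that this set is a sublattice of $\cT_{mn}$; and third, that the blow-up map is an order isomorphism onto $\cT_n^{(m)}$. I would begin with a purely combinatorial description of the paths in the image. A Dyck path of size $mn$ lies in the image iff its up steps group into $n$ consecutive blocks of size $m$, i.e. iff no down step occurs strictly inside a block of up steps; equivalently, the heights at which the path touches a given horizontal level are congruent mod $m$ in the appropriate sense. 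The first step is to show that this ``$m$-Dyck'' condition is equivalent to $w \ge w_0$ in the Tamari order. For one direction, $w_0$ is itself an $m$-Dyck path and one checks directly that each elementary covering move of Definition~\ref{def-m-tamari} (swapping an east step with the shortest ballot factor starting at the following north step), when read on Dyck paths, preserves the property of having up steps grouped in blocks of $m$ — the shortest translated $1$-ballot factor $S$ beginning at a north step inside a block automatically extends to the end of that block and on, so blocks are never broken. For the converse, one shows every $m$-Dyck path other than $w_0$ covers some $m$-Dyck path, so iterating downward reaches $w_0$; this is a finiteness/induction argument using that the Tamari order has $w_0$ as the unique path of its ``shape'' minimal among $m$-Dyck paths.

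Next I would establish the sublattice property. The general principle is that in any finite lattice $L$, an up-set (filter) $\{x : x \ge a\}$ is always closed under joins, so the only thing to check is closure under meets, or equivalently that the meet $P \wedge_{\cT_{mn}} Q$ of two $m$-Dyck paths is again an $m$-Dyck path. Here I would invoke the known description of meets in the Tamari lattice (or, following \cite{bousquet-fusy-preville}, argue directly on paths): since the $m$-Dyck condition is equivalent to being $\ge w_0$, and $w_0 \le P$, $w_0 \le Q$ forces $w_0 \le P\wedge Q$, the meet is automatically an $m$-Dyck path. So in fact the sublattice claim is an immediate consequence of the first step together with the elementary fact that a principal filter in a lattice is a sublattice — the genuine content is entirely in identifying the filter with the $m$-Dyck paths.

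Finally, for the order isomorphism, I would check that the blow-up map $\varphi : \cT_n^{(m)} \to \{m\text{-Dyck paths}\}$ is a bijection (clear: blocks of size $m$ record the original north steps) and that it is an order isomorphism, i.e. $P \le Q$ in $\cT_n^{(m)}$ iff $\varphi(P) \le \varphi(Q)$ in $\cT_{mn}$. For this it suffices to match covering relations: I would show that a single covering step in $\cT_n^{(m)}$ (Definition~\ref{def-m-tamari}) corresponds, under $\varphi$, to a sequence of covering steps in $\cT_{mn}$ — precisely, swapping an east step $a$ with an $m$-ballot factor $S$ translates to a succession of elementary Dyck-path rotations that slide the $m$ up steps of the relevant block past the blown-up factor $\varphi(S)$ — and conversely that any saturated chain between two $m$-Dyck paths in $\cT_{mn}$ decomposes into such packets, so no ``new'' relations are created. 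The main obstacle I anticipate is this last point: carefully matching the covering relation of the $m$-Tamari lattice with chains in $\cT_{mn}$, and in particular verifying that distinct $m$-Dyck paths comparable in $\cT_{mn}$ have their comparability witnessed by a chain staying within $m$-Dyck paths (so that $\varphi$ reflects the order, not merely preserves it). Since all of this is established in \cite[Prop.~4]{bousquet-fusy-preville}, in the paper itself I would simply cite it; the sketch above is how I would reconstruct the argument.
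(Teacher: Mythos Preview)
The paper does not prove this proposition at all: it is quoted verbatim from~\cite[Prop.~4]{bousquet-fusy-preville} and used as a black box, with no argument given. You correctly recognise this at the end of your proposal, so there is nothing to compare against; your sketch goes well beyond what the present paper does.

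As for the sketch itself, the overall architecture is sound. The observation that a principal filter in a lattice is automatically a sublattice is exactly the right way to reduce the ``sublattice'' claim to the identification $\{m\text{-Dyck paths}\}=\{P:P\ge w_0\}$. Your forward direction (a Tamari cover applied to an $m$-Dyck path yields an $m$-Dyck path) is correct: since the swapped step $a$ is a down step, the ranks of all up steps are unchanged, and the block $b_1\cdots b_m$ containing $b$ lies entirely inside the factor $S$, so no block is split. The reverse direction (every $m$-Dyck path $\ne w_0$ covers some $m$-Dyck path) is the step that needs the most care; you would want to exhibit, for any such $P$, a down step in $P$ that can be pulled leftwards past a Dyck factor so that the result is still $m$-Dyck, and then conclude by a descent argument. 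For the order isomorphism, your plan to show that one $m$-Tamari cover corresponds to a chain of $m$ ordinary Tamari covers is the standard route, and the delicate point you flag --- that $\varphi$ \emph{reflects} the order, not just preserves it --- is indeed where the work lies.
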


We now describe a recursive decomposition of (unlabelled) Tamari intervals,
again borrowed from~\cite{bousquet-fusy-preville}.
Thanks to the embedding of $\cTn^{(m)}$ into
$\mathcal{T}_{nm}$, it will also enable us to decompose $m$-Tamari
intervals, for any value of $m$, in the next subsection.

A Tamari interval $I=[P,Q]$ is \emph{pointed} if its lower path $P$ has a
distinguished contact (we refer to the introduction for the definition
of contacts). Such a contact splits $P$ into two Dyck paths
$P^\ell$ and $ P^r$, respectively located to the left and to the right
of the contact.
 The pointed interval $I$ is \emph{proper} 
is $P^\ell$ is not empty,
\emm i.e.,, if the distinguished contact is not $(0,0)$.
We often use the notation $I=[P^\ell P^r, Q]$ to denote a pointed Tamari interval.
\begin{Proposition}\label{prop:decomp}
Let $I_1=[P_1^\ell P_1^r,Q_1]$  be a pointed Tamari interval, and let
$I_2=[P_2,Q_2]$ be a Tamari interval.
Construct the  Dyck paths 
$$
P=uP_1^\ell dP_1^rP_2 \quad \mbox{ and } \quad    Q=uQ_1dQ_2 
$$
as shown in Figure~{\rm\ref{fig:concatenation}}.
Then $I=[P,Q]$ is a Tamari
interval. Moreover, the mapping $(I_1,I_2)\mapsto I$ is a bijection between
pairs $(I_1,I_2)$ formed of a pointed Tamari interval 
 and a Tamari
interval,
and Tamari intervals $I$ of positive size.
Note that $I_1$ is proper if and only if the initial rise of $P$ is
not $1$.
\end{Proposition}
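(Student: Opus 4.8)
The plan is to invoke the classical order-theoretic description of the Tamari lattice on Dyck paths. For a Dyck path $w$ with up steps $a_1,\dots,a_n$ read from left to right, let $e_i(w)$ denote the number of up steps of $w$ strictly enclosed by the arch joining $a_i$ to the down step matching it. One checks readily from the covering relation of Definition~\ref{def-m-tamari} (specialised to $m=1$) that pushing a down step to the right past a Dyck factor increases exactly one of the coordinates $e_i$ and leaves the others unchanged; since $w\mapsto(e_1(w),\dots,e_n(w))$ is injective, a routine argument then shows that, for Dyck paths $v,w$ of the same size, $v\le w$ in the Tamari lattice \emph{if and only if} $e_i(v)\le e_i(w)$ for all $i$. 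Throughout I index the up steps of $P$ and of $Q$ by $1,\dots,n$, where $n=1+|P_1|+|P_2|=1+|Q_1|+|Q_2|$ is their common size (the two middle expressions agree because $|P_1|=|Q_1|$ and $|P_2|=|Q_2|$, $I_1$ and $I_2$ being intervals), so that the $i$-th up step of $P$ and the $i$-th up step of $Q$ occupy structurally corresponding positions.

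First I would show $P\le Q$. In $P=uP_1^\ell dP_1^rP_2$ the initial up step is closed by the displayed $d$, so $e_1(P)=|P_1^\ell|\le|P_1^\ell|+|P_1^r|=|Q_1|=e_1(Q)$. For $i\ge2$, a short case check shows that the arch of the $i$-th up step of $P$ is internal to $P_1^\ell$, to $P_1^r$, or to $P_2$ --- inserting $d$ between $P_1^\ell$ and $P_1^r$, prepending $u$, and appending $P_2$ alter no arch internal to $P_1$ or to $P_2$ --- whence $e_i(P)=e_{i-1}(P_1)$ if that up step lies in $P_1^\ell P_1^r$ and $e_i(P)=e_{i-1-|P_1|}(P_2)$ if it lies in $P_2$; likewise $e_i(Q)=e_{i-1}(Q_1)$ or $e_i(Q)=e_{i-1-|Q_1|}(Q_2)$. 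Since $P_1\le Q_1$ and $P_2\le Q_2$, the characterization gives $e_i(P)\le e_i(Q)$ in all cases, so $P\le Q$ and $I=[P,Q]$ is an interval.

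The substantive part is the inverse map. Given a Tamari interval $I=[P,Q]$ with $n=|I|\ge1$, the path $Q$ is a nonempty Dyck path and has a unique first-return decomposition $Q=uQ_1dQ_2$. Write $P=uP_1^\ell dZ$, where $uP_1^\ell d$ is the first-return block of $P$ and $Z$ is the (possibly empty) remainder; then $|P_1^\ell|=e_1(P)\le e_1(Q)=|Q_1|$. Decompose $Z=Z^{(1)}\cdots Z^{(s)}$ into irreducible Dyck factors and put $c_0=|P_1^\ell|$ and $c_j=c_{j-1}+|Z^{(j)}|$ for $j\ge1$. The key claim is that $c_j=|Q_1|$ for some (necessarily unique) $j$; granting it, set $P_1^r=Z^{(1)}\cdots Z^{(j)}$ and $P_2=Z^{(j+1)}\cdots Z^{(s)}$, so that $P=uP_1^\ell dP_1^rP_2$ with $|P_1^\ell P_1^r|=|Q_1|$ and $|P_2|=n-1-|Q_1|=|Q_2|$. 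To prove the claim: the $c_j$ strictly increase, $c_0\le|Q_1|$, and $c_s=n-1\ge|Q_1|$, so it suffices to exclude a jump across $|Q_1|$. Suppose $c_j<|Q_1|$. The first up step of $Z^{(j+1)}$ then has index $i=c_j+2$ with $2\le i\le|Q_1|+1$, so in $Q$ this up step lies inside the factor $Q_1$ and its arch stays within $Q_1$; hence $e_i(Q)\le|Q_1|+1-i$. As $P\le Q$, also $e_i(P)\le|Q_1|+1-i$, while the arch of the $i$-th up step of $P$ is exactly $Z^{(j+1)}$, so $e_i(P)=|Z^{(j+1)}|-1$; combining, $c_{j+1}=c_j+|Z^{(j+1)}|\le|Q_1|$. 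Thus no jump occurs, proving the claim.

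It remains to see that these pieces are honest intervals and that the two maps are mutually inverse. Let $P_1=P_1^\ell P_1^r$, pointed at the contact separating $P_1^\ell$ from $P_1^r$. For $1\le j\le|Q_1|$ one has $e_j(P_1)=e_{j+1}(P)$ and $e_j(Q_1)=e_{j+1}(Q)$, and for $1\le j\le|Q_2|$ one has $e_j(P_2)=e_{j+|Q_1|+1}(P)$ and $e_j(Q_2)=e_{j+|Q_1|+1}(Q)$; since $P\le Q$, this forces $P_1\le Q_1$ and $P_2\le Q_2$, so $I_1=[P_1,Q_1]$ is a pointed Tamari interval and $I_2=[P_2,Q_2]$ a Tamari interval. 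Unwinding the constructions shows that starting from $(I_1,I_2)$, building $(P,Q)$, and decomposing again returns $(I_1,I_2)$ (uniqueness of $j$ in the claim is what makes this work), and conversely; so the correspondence is a bijection. Finally, $I_1$ is proper iff $P_1^\ell\neq\emptyset$ iff $e_1(P)=|P_1^\ell|\ge1$ iff $P$ does not begin with $ud$ iff the initial rise of $P$ is not $1$, which is the closing remark. I expect the only real obstacle to be the size-matching claim $c_j=|Q_1|$: everything else is bookkeeping with the statistics $e_i$, but that step genuinely uses the order relation $P\le Q$ --- not just the equalities $|P_1|=|Q_1|$ and $|P_2|=|Q_2|$ --- to prevent an irreducible factor of $Z$ from straddling the value $|Q_1|$.
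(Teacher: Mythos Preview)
Your proof is correct and self-contained. The paper does not prove Proposition~\ref{prop:decomp} directly: Remark~2 following the statement explains that it is obtained by combining Proposition~5 and the case $m=1$ of Lemma~9 of~\cite{bousquet-fusy-preville}. Remark~1 merely describes the inverse map (first-return decompose $Q$; let $P_2$ be the suffix of $P$ of the same size as $Q_2$; first-return decompose the remainder of $P$), but the fact that this suffix begins at a contact of $P$---so that $P_2$ and $P_1^r$ are genuine Dyck paths---and that $P_1\le Q_1$ and $P_2\le Q_2$, is deferred to the cited reference.

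Your route is genuinely different. You invoke the classical componentwise characterisation of the Tamari order via the arch statistics $e_i$, and your ``no jump across $|Q_1|$'' argument supplies precisely the well-definedness step that the paper outsources: if $c_j<|Q_1|$ then the first up step of $Z^{(j+1)}$ has index $i=c_j+2\le |Q_1|+1$, whence $e_i(P)=|Z^{(j+1)}|-1\le e_i(Q)\le |Q_1|+1-i$ forces $c_{j+1}\le |Q_1|$. The same characterisation then yields $P_1\le Q_1$ and $P_2\le Q_2$ for free. This makes the argument elementary and independent of~\cite{bousquet-fusy-preville}; the only cost is that you must take the $e_i$-characterisation of the Tamari order as known (your sketch---that a cover changes exactly one coordinate and that $w\mapsto(e_i(w))_i$ is injective---is the standard opening, and the remaining step is indeed routine).
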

\begin{figure}
\begin{center}
\input{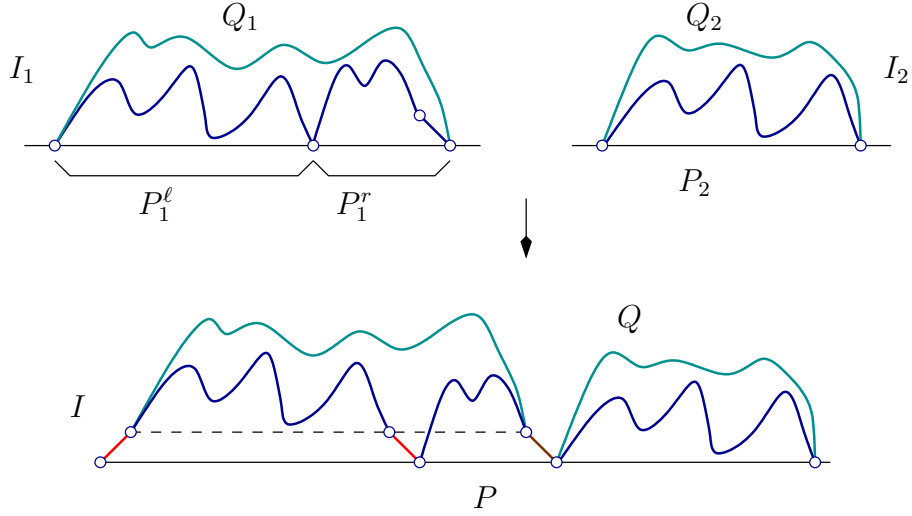}
\end{center}
\caption{The recursive construction of Tamari intervals.}
\label{fig:concatenation}
\end{figure}
\noindent{\bf Remarks}\\
\noindent 1. To recover $P_1^\ell$, $ P_1^r$, $Q_1$, $P_2$ and $Q_2$
from $P$ and $Q$, one proceeds as follows: $Q_2$ is the part of $Q$
that follows the first return of $Q$ to the $x$-axis; this defines
$Q_1$ unambiguously. The path $P_2$ is the
suffix of $P$ having the same size as $Q_2$.  This defines $P_1:=uP_1^\ell dP_1^r$
unambiguously. Finally $P_1^r$ is the part of $P_1$
that follows the first return of $P_1$ to the $x$-axis, and this
defines $P_1^\ell$ unambiguously.
\\
\noindent 2. This proposition is obtained  by combining Proposition~5
in~\cite{bousquet-fusy-preville} and
the case $m=1$ of Lemma~9 in~\cite{bousquet-fusy-preville}. With the
notation $(P';p_1)$ and $(Q',q_1)$ used 
therein, the above paths $P_2$
and $Q_2$ are respectively the parts of $P'$ and 
$Q'$ that lie to the right of $q_1$, while $P_1^\ell  P_1^r$ and
$Q_1$ are the parts of $P'$ and 
$Q'$ that lie to the left of $q_1$. The pointed vertex $p_1$  is the
endpoint of $P_1^\ell$. Proposition~5
in~\cite{bousquet-fusy-preville} guarantees that, if $P\preceq Q$ in
the Tamari order, then $P_1^\ell  P_1^r \preceq Q_1$ and $P_2 \preceq Q_2$.
\\
\noindent 3. One can keep track of several parameters in the
construction of Proposition~\ref{prop:decomp}. For instance,
the initial rise of $Q$ equals  the initial 
rise of $Q_1$ plus one. 
 Also, the number of contacts of $P$ is 
\beq\label{eq:contacts}
c(P)=c(P_1^r )+c(P_2).
\eeq

\subsection{From the decomposition to a functional equation}
We will now establish the following functional equation.
\begin{Proposition}\label{prop:eq}
For $m\ge 1$, let  $ F^{(m)}(t;x,y)\equiv F(x,y)$ be the exponential \gf\ of
labelled $m$-Tamari intervals, defined by~\eqref{F-def}.
Then $F(x,0)=x$ and
\beq\label{eq:Fb}
\frac{\partial F}{\partial y} (x,y)=tx \left(F(x,1)\cdot  \Delta\right)^{(m)}(F(x,y)),
\eeq
where $\Delta$ is the following divided difference operator
$$
\Delta S(x)=\frac{S(x)-S(1)}{x-1},
$$
and the power $m$ means  that the operator $G(x,y)\mapsto F(x,1)\cdot \Delta
G(x,y)$ is applied $m$ times.
\end{Proposition}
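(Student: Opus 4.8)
The plan is to derive the functional equation \eqref{eq:Fb} directly from the recursive decomposition of Proposition~\ref{prop:decomp}, tracking how the statistics (size, contacts, initial rise) and the labels behave under the construction $P=uP_1^\ell dP_1^r P_2$, $Q=uQ_1 dQ_2$, and then to reduce the general $m$ case to the case $m=1$ via the embedding $\cT_n^{(m)}\hookrightarrow \cT_{mn}$ given by Proposition~\ref{prop:sublattice}.

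First I would treat the case $m=1$. The base case $F(x,0)=x$ records that the only $1$-Tamari intervals with initial rise $0$ are the trivial ones: $r(Q)=0$ forces $Q$ (hence $P$) to be empty, contributing $x^{c(P)}=x^1$ (the single contact $(0,0)$), so $F(x,0)=x$. For the main equation, I would apply $\partial/\partial y$ to mark one unit of initial rise; by Remark~3 after Proposition~\ref{prop:decomp}, $r(Q)=r(Q_1)+1$, so differentiating in $y$ corresponds bijectively to intervals $I$ of positive size decomposed as $(I_1,I_2)$ with $I_1$ a \emph{pointed} interval and $I_2$ an interval, the extra factor $y^{r(Q_1)}\cdot y$ becoming $y^{r(Q)}$ after we pick up the marked rise. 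The size adds as $|I|=|I_1|+|I_2|+1$, giving the factor $t$ and, through the exponential generating function, a binomial coefficient $\binom{|I|-1}{|I_1|}$ that distributes the labels $1,\dots,|I|$ between the two sub-intervals (the label $1$ is forced onto the first north step created by the new $u$ in $Q=uQ_1dQ_2$, since labels increase along consecutive north steps and that step is isolated at the bottom; the remaining labels split freely). The contact count is $c(P)=c(P_1^r)+c(P_2)$ by \eqref{eq:contacts}, and the extra $u\,(\cdot)\,d$ wrapper contributes exactly one new contact, hence the prefactor $x$; the pointing of $I_1$ at a contact of $P_1=P_1^\ell P_1^r$ is what turns the plain generating function $F(x,y)$ into the pointed one — and pointing at a contact which splits $P_1$ as $P_1^\ell P_1^r$ with $c(P_1^r)$ contacts to the right is precisely the operator $\Delta$: writing $F(x,y)=\sum_k F_k(y)x^k$, summing over the choice of split point gives $\sum_{j\ge 1}\big(\sum_{k\ge j}F_k(y)x^{\,k-j+?}\big)$, which after bookkeeping is $F(x,1)\cdot\Delta F(x,y)$ — the $F(x,1)$ factor being the generating function of $I_2$ with its initial rise now unmarked (set $y=1$) because only $r(Q_1)$, not $r(Q_2)$, feeds into $r(Q)$. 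Assembling these gives $\partial F/\partial y = tx\,F(x,1)\,\Delta F(x,y)$, which is \eqref{eq:Fb} for $m=1$.

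For general $m$, I would use Proposition~\ref{prop:sublattice}: an $m$-Tamari interval of size $n$ is the same as a Tamari interval of size $mn$ on $m$-Dyck paths, i.e.\ both endpoints have their up steps grouped into blocks of $m$ and the lower path dominates $(u^md^m)^n$. I would rerun the $1$-Tamari decomposition but insist that the wrapping step $uP_1^\ell dP_1^r P_2$ stays within $m$-Dyck paths: the single new $u$ must be completed to a full block, which forces the decomposition to be iterated $m$ times before one closes off with the matching $d$'s, i.e.\ $P = u^m P_1^\ell d\,\widetilde P^{(2)} d\cdots$, and each of the $m$ nested pointings contributes one application of the operator $G\mapsto F(x,1)\cdot\Delta G$ (one new contact each, whence still a single overall $x$ because the $m$ inner returns to the axis coincide block-wise — here I would lean on the precise contact bookkeeping of \cite{bousquet-fusy-preville}). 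The label $1$ is again forced (onto the first north step of the outermost block of $Q$), giving the single factor $t x$ in front, and the $m$-fold nesting yields exactly $\big(F(x,1)\cdot\Delta\big)^{(m)}$ applied to $F(x,y)$, which is \eqref{eq:Fb}. The base case $F(x,0)=x$ is unchanged.

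The main obstacle is the combinatorial bookkeeping of \emph{contacts} under the $m$-fold iteration: one must check that nesting the elementary construction $m$ times, to respect the block structure of $m$-Dyck paths, produces exactly one new contact of the lower path $P$ overall (hence a single $x$, not $x^m$) while still producing $m$ copies of the operator $\Delta$ — the asymmetry being that each nested pointing is at a contact of an \emph{intermediate} path, not of $P$ itself. Getting this alignment right, together with verifying that the label-distribution binomials are consistent with the exponential generating function convention $t^{|I|}/|I|!$ across all $m$ nested pieces and the trailing interval $I_2$, is the delicate step; the rest is a direct transcription of Proposition~\ref{prop:decomp} and Proposition~\ref{prop:sublattice}, for which \cite{bousquet-fusy-preville} already supplies the order-theoretic content.
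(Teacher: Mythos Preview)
Your argument has a genuine gap in the treatment of labels. The claim that ``the label $1$ is forced onto the first north step created by the new $u$ in $Q=uQ_1dQ_2$, since \dots that step is isolated at the bottom'' is false: whenever $r(Q_1)\ge 1$, the first up step of $Q$ is immediately followed by the first up step of $Q_1$, so they lie in the \emph{same} run of consecutive north steps. The label on the new first step must be smaller than every label appearing in the initial run of $Q_1$, but nothing forces it to be $1$. Consequently the labels do not split as a free binomial $\binom{|I|-1}{|I_1|}$ between $I_1$ and $I_2$, and the exponential generating functions do not simply multiply as you assert.

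There is also a conflation between $\partial F/\partial y$ and $F(x,y)-F(x,0)$: the derivative does not merely restrict to intervals of positive initial rise, it weights each interval by $r(Q)$, i.e.\ it records a \emph{choice} of one of the $r(Q)$ blocks in the first ascent of $Q$. The paper uses precisely this extra choice to absorb the label constraint. It introduces \emph{$k$-augmented} $m$-Tamari intervals, in which the first $k$ up steps are present but \emph{unlabelled}, with generating function $F_k$. One application of Proposition~\ref{prop:decomp} peels a single unlabelled up step off the front of a $k$-augmented interval; since that step carries no label, the labels now \emph{do} split freely between the pointed $(k-1)$-augmented piece and the trailing $m$-interval, giving $F_k=F(x,1)\cdot\Delta F_{k-1}$ and hence $F_m=(F(x,1)\cdot\Delta)^{(m)}F$. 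The link to the derivative is a separate bijection: a labelled $m$-interval together with a distinguished block in the first ascent corresponds, after moving that block's label onto the very first block, to an interval whose first block may carry any of the $n$ labels; forgetting that label and renormalising gives an $m$-augmented interval, and this yields $\partial F/\partial y = tx\,F_m$. Your outline is missing this label-moving step, which is exactly what reconciles the derivative with the EGF product.
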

\begin{proof}
We constantly use  the inclusion $\mathcal{T}_n^{(m)}\subset
\mathcal{T}_{nm}$ given by Proposition~\ref{prop:sublattice}. That is, we
identify elements of $\mathcal{T}_n^{(m)}$ with $m$-Dyck paths having
$n$ blocks.

It is obvious that $F(x,0)=x$, since the interval of size $0$ is the
only interval of initial rise~$0$, and has one contact.
The functional equation~\eqref{eq:Fb} relies on the decomposition of
Tamari intervals described in Proposition~\ref{prop:decomp}. 

We will actually apply this
decomposition to a slight
 generalization of
$m$-Tamari intervals.
For $k\geq 0$, a \emph{$k$-augmented $m$-Dyck path} is a Dyck path of 
size
 $k+mn$ for some integer $n$, where the first $k$ steps are up steps, and
 all the other up steps can be partitioned into \emph{blocks} of $m$
 consecutive up steps. 
The  $k$ first steps are not considered to be part of a block, even if $k$ is a multiple of $m$.
We denote by $\mathcal{T}^{(m,k)}$  the set of $k$-augmented $m$-Dyck paths.

A Tamari interval $I=[P,Q]$ is a \emph{$k$-augmented $m$-Tamari
  interval} if both $P$ and $Q$ belong to
$\mathcal{T}^{(m,k)}$. Assume that $P$ and $Q$ contain $n$ blocks. Then $I$
is \emm labelled, if the blocks of $Q$  are labelled from 1 to $n$
in such a way the labels increase along any sequence of consecutive
blocks. Note that labelled $0$-augmented $m$-Tamari
  intervals coincide with  labelled $m$-Tamari intervals.
Generalizing~\eqref{F-def}, we 
denote by  $F_k(t;x,y)\equiv F_k(x,y)$
the exponential generating function of labelled $k$-augmented $m$-Tamari
  intervals,
counted by the number of blocks (variable~$t$), the number of \emm
non-initial, contacts (that is, contacts distinct from $(0,0)$ ---
variable $x$), and the number of blocks contained in the first ascent
(variable $y$). 

In what follows, we  first obtain an expression of
$F_k(x,y)$ in terms of $F(x,y)$:
\beq\label{Fk-F}
 F_k(x,y) = \left\{
\begin{array}{lll}
 F(x,y)/x & \hbox{if } k=0,
\\
(F(x,1)\cdot\Delta)^{(k)}F(x,y) & \hbox{otherwise.}
\end{array}
\right.
\eeq
We then relate $m$-augmented $m$-Tamari intervals to $m$-Tamari intervals,
proving that 
\beq\label{Fm-F}
tx F_m(x,y)= \frac{\partial F}{\partial y} (x,y).
\eeq
This identity, combined with the case $k=m$ of~\eqref{Fk-F},
gives~\eqref{eq:Fb}.

\medskip
 We thus need to prove~\eqref{Fk-F} and~\eqref{Fm-F}. The case $k=0$ of~\eqref{Fk-F} is clear, since $0$-augmented
 $m$-Tamari   intervals are just  $m$-Tamari intervals. The factor $x$
 arises from the fact that $F_0(x,y)$ only keeps track of non-initial 
  contacts, while $F(x,y)$ counts all of them.

Let us now address the case $k\ge 1$ of~\eqref{Fk-F}. 
Let $I= [P,Q]$ be a labelled  
$k$-augmented $m$-Tamari interval. By Proposition~\ref{prop:decomp},
one can decompose $I$ into  a pair
$(I_1,I_2)$ of Tamari intervals, with $I_1=[P_1^\ell P_1^r,Q_1]$ and
$I_2=[P_2,Q_2]$ (see Figure~\ref{fig:concatenation}). Since the up
steps of $P_2$ and $Q_2$ 
are not in the first ascent of $P$ and $Q$, the paths $P_2$ and $Q_2$
are actually $m$-Dyck paths, so that $I_2$ is an $m$-Tamari interval.
Similarly, $I_1$ is a pointed $(k-1)$-augmented $m$-Tamari interval, which
is proper  if $k> 1$ (Proposition~\ref{prop:decomp}).

The blocks of $Q_1$ and $Q_2$ inherit a labelling from
$Q$. We  normalise these labellings in the usual way: if $P_1$ has
$n_1$ blocks and $Q_2$ has $n_2$ blocks, 
we relabel the blocks of $Q_1$ (resp.~$Q_2$) with $1,
\ldots, n_1$ (resp. $1, \ldots, n_2$) while preserving the relative
order of the labels occurring in $Q_1$  (resp.~$Q_2$).

Conversely, consider a pair $(I_1,I_2)$, where
$I_1=[P_1^\ell P_1^r,Q_1]$ is a labelled pointed  $(k-1)$-augmented $m$-Tamari
interval and $I_2$ is a labelled $m$-Tamari interval. If $k>1$, assume
moreover that $I_1$ is proper. If $Q_1$ and $Q_2$ have  respectively
$n_1$ and $n_2$  blocks,  one can reconstruct from $(I_1, I_2)$
exactly $\frac{(n_1+n_2)!}{n_1!n_2!}$ different labelled $k$-augmented
$m$-Tamari intervals $I=[P,Q]$, having $n_1+n_2$
blocks. By~\eqref{eq:contacts}, the number of non-initial 
contacts in $I$ is the number of non-initial contacts in $P_1^r$, plus
the number of contacts in $P_2$. The number of blocks in the first
ascent of $Q$ is the number of blocks in the first ascent of $Q_1$.

The exponential \gf\ of labelled $m$-Tamari intervals $I_2$, counted by the
size and the number of contacts, is $F(x,1)$. Let
$F_{k-1}^\circ(t;x,y)\equiv F_{k-1}^\circ(x,y)$ be the exponential
\gf\  of labelled \emm proper, pointed  $(k-1)$-augmented $m$-Tamari 
intervals $I_1=[P_1^\ell P_1^r,Q_1]$, counted by the size, the number
of non-initial contacts in $P_1^r$ and the number of blocks in the
first ascent of $Q_1$. Note that the \gf\ of labelled \emm non-proper,
pointed $0$-augmented $m$-Tamari 
intervals is simply $F_0(x,y)$. The above construction then implies that
\beq\label{Fk-Fcirc}
F_k(x,y)= F(x,1) \left( F_{k-1}^\circ(x,y)+ F_0(x,y){\mathbbm 1}_{k=1}\right).
\eeq
We claim that  
\beq\label{Fcirc}
F_{k-1}^\circ(x,y)=
\Delta F_{k-1}(x,y).
\eeq
It follows from~(\ref{Fk-Fcirc}--\ref{Fcirc}) that
$$
F_k(x,y)= \left\{
  \begin{array}{lll}
   F(x,1) \cdot \Delta \big(xF_0(x,y)\big)& \mbox{if } k=1,\\
  F(x,1) \cdot \Delta F_{k-1}(x,y) &\mbox{otherwise}.
  \end{array}
 \right.
$$
Now  a simple induction on $k>0$, combined with the case $k=0$ of~\eqref{Fk-F} and, proves the case $k > 0$
of~\eqref{Fk-F}. So~\eqref{Fk-F} will be proved if we establish~\eqref{Fcirc}. 
Write
$$
F_{k-1}(x,y)= \sum_{i\ge 0} F_{k-1,i}(y) x^i,
$$
so that $F_{k-1,i}(y)$ counts labelled $(k-1)$-augmented $m$-Tamari intervals
having $i$ non-initial contacts. By pointing a non-initial contact, such an interval gives rise to $i$
labelled proper pointed  $(k-1)$-augmented $m$-Tamari 
intervals $[P_1^\ell P_1^r,Q_1]$, having respectively $0, 1, \ldots,
i-1$ non-initial contacts in $P_1^r$. Hence
\begin{eqnarray*}
 F_{k-1}^\circ(x,y)&=& \sum_{i\ge 0} F_{k-1,i}(y) \left(1+x+ \cdots
+x^{i-1}\right),
\\
&= &
\sum_{i\ge 0} F_{k-1,i}(y) \frac{x^i -1}{x-1}
\\
&=& \frac{F_{k-1}(x,y)-F_{k-1}(1,y)}{x-1} \\
&=& \Delta F_{k-1}(x,y).
\end{eqnarray*}
This coincides with~\eqref{Fcirc}.

\medskip
We finally want to prove~\eqref{Fm-F}, and this will complete the
proof of Proposition~\ref{prop:eq}. A labelled $m$-augmented
$m$-Tamari interval $I=[P,Q]$ having $n-1$ blocks gets in $F_m(x,y)$ a weight
$$
\frac{t^{n-1}}{(n-1)!}\,x^{c(P)-1} y^{r(Q)-1}
= \frac n{tx}\,\left( \frac{t^n}{n!}\,x^{c(P)} y^{r(Q)-1}\right),
$$
where $r(Q)$ is the initial rise of $Q$, divided by $m$. 
Let us interpret the factor $n$ as the choice of a label
$i\in\llbracket 1, n\rrbracket$ assigned
to the  first $m$ steps of $P$, while the labels of the
blocks, which were $1, \ldots, n-1$, are redistributed so as to avoid
$i$. 
The above identity shows that  $tx F_m(x,y)$ counts (by the number of blocks, the number
of contacts, and the initial rise minus one), $m$-Tamari intervals
$[P,Q]$ in which the blocks are labelled in such a way the labels
increase along sequences of consecutive blocks, except that the first
block of the first ascent may have a larger label than the second
block of the first ascent. Such intervals are
obtained from  usual labelled $m$-Tamari intervals by choosing a block
in the first ascent and exchanging its label with the label of the
very first block.
In terms of power series, choosing a block of the first ascent boils down to
differentiating with respect to $y$ (this also decreases by 1 the
exponent of $y$), and we thus obtain~\eqref{Fm-F}.
\end{proof}

\section{Principle of the proof, and  the case $m=1$}
\label{sec:sol1}
\subsection{Principle  of the proof}
\label{sec:principle}
Let us consider the functional equation~\eqref{eq:Fb}, together with
the initial condition  $F(t;x,0)=x$. Perform the change of
variables~\eqref{t-x-param}, and denote 
$G(z;u,y)\equiv G(u,y)= F(t;x,y)$. Then $G(u,y)$ is a series in $z$
with coefficients in $\qs[u,y]$, satisfying 
\beq\label{eq:G}
\frac{\partial G}{\partial y} (u,y)= 
z(1+u) e^{-mzu-m(m+1)z}\left( \frac {uG(u,1)}{(1+u)e^{-mzu}-1}
\ \Delta_u\right)^{(m)} G(u,y),  
\eeq
with
$
\Delta_u H(u)= \frac {H(u)-H(0)}{u},
$
and the initial condition 
\beq\label{init-G}
G(u,0)=(1+u)e^{-mzu}.
\eeq
Observe that this pair of equations defines $G(z;u,y)\equiv G(u,y)$ uniquely as a
\fps\ in $z$. Indeed, the coefficient of $z^n$ in $G$
can be computed inductively from these equations (one first determines
the coefficient of $z^n$ in $\frac{\partial G}{\partial y}$,
which can be expressed, thanks to~\eqref{eq:G}, in terms of the
coefficients of $z^i$ in  $G$ for $i<n$. Then the coefficient of $z
^n$ in $G$ is obtained by integration with respect to $y$, using the
initial condition~\eqref{init-G}).
Hence, if we exhibit
a series $\tilde G(z;u,y)$ that satisfies both equations, then
 $\tilde G(z;u,y)=G(z;u,y)$. We are going to construct such a series.

Let 
\beq\label{tG1}
G_1(z;u)\equiv G_1(u)= (1+u)e^{(m+1)z-(m-1)zu}\left(1+\frac{1-e^{mzu}}u\right).
\eeq
Then $G_1(u)$ is a series in $z$ with polynomial coefficients in $u$,
which, as we will see, coincides with $G(u,1)$. Consider now the
following  equation, obtained from~\eqref{eq:G} by replacing
$G(u,1)$ by its conjectured value $G_1(u)$:
\beq\label{eq:Gtilde0}
\frac{\partial \tilde G}{\partial y} (z;u,y)= 
z(1+u) e^{-mzu-m(m+1)z}\left( \frac
{uG_1(u)}{(1+u)e^{-mzu}-1}\ \Delta_u\right)^{(m)} \tG(z;u,y), 
\eeq
with the initial condition 
\beq\label{init:Gtilde}
\tilde G(z;u,0)=(1+u)e^{-mzu}.
\eeq 
 Eq.~\eqref{eq:Gtilde0}  can be rewritten as 
\beq\label{eq:Gtilde}
\frac{\partial \tilde G}{\partial y} (z;u,y)= z(1+u)e^{-mzu} \Lambda^{(m)}\tilde G(z;u,y)
\eeq
where 
$\Lambda$ is the  operator defined by
\beq\label{Lambda-def}
\Lambda (H) (u) = 
\frac{H(u)-H(0)}{A(u)} 
\eeq
with \beq\label{A-def}
A(u)=\frac{u}{1+u}e^{-zu},
 \eeq
and $\Lambda^{(m)}$ denotes the $m$-th iterate of  $\Lambda$.
Again, it is not hard
to see that~\eqref{eq:Gtilde} and the initial
condition~\eqref{init:Gtilde} define a unique 
series in $z$, denoted $\tG(z;u,y)\equiv \tG(u,y)$. The coefficients
of this series lie 
in $\qs[u,y]$. The principle of our proof can be described as follows.

\begin{quote}
{\it  If we prove that $\tG(u,1)=G_1(u)$, then the
  equation~{\rm\eqref{eq:Gtilde0}} satisfied by $\tG$ coincides with the
  equation~\eqref{eq:G} that defines $G$, and thus
  $\tG(u,y)=G(u,y)$. In particular,
  $G_1(z;u)=\tG(z;u,1)=G(z;u,1)=F(t;x,1)$, and 
  Theorem~{\rm\ref{thm:main}} is proved. }
\end{quote}

\subsection{The case $m=1$}
\label{subsec:sol1}
Take $m=1$. In this subsection, we describe the three steps that, starting
from~\eqref{eq:Gtilde},  prove that
$\tG(u,1)=G_1(u)$. In passing, we establish the expression~\eqref{F-param-y1} of
$F(t;x,1)$ (equivalently, of $\tG(z;u,1)$) given in  
Theorem~\ref{thm:1}.

\subsubsection{A homogeneous differential equation and its solution}
\label{sec:m11}
When $m=1$, the equation~\eqref{eq:Gtilde}  defining $\tG(z;u,y)\equiv
\tG(u,y)$ reads 
\beq\label{ED-uy}
\frac{\partial \tG}{\partial y} (u,y)= z(1+u)(1+\bu) ({\tG(u,y)-\tG(0,y)}),
\eeq
where $\bu=1/u$, with the initial condition 
\beq\label{init-1}
\tG(u,0)=({1+u})e^{-zu}.
\eeq
These equations imply  that $\tG(-1,y)=0$.  The
coefficient of $\tG(u,y)$ in the right-hand side of~\eqref{ED-uy} is symmetric in $u$ and $\bu$. We are going
to exploit this symmetry to eliminate the term $\tG(0,y)$. 
Replacing $u$ by $\bu$ in~\eqref{ED-uy} gives
$$
\frac{\partial \tG}{\partial y} (\bu,y)= z(1+u)(1+\bu)
({\tG(\bu,y)-\tG(0,y)}),
$$
so that
$$
\frac{\partial}{\partial y} \left( \tG(u,y)-\tG(\bu,y)\right)= z(1+u)(1+\bu)
\left(\tG(u,y)-\tG(\bu,y)\right).
$$
This is a homogeneous linear differential equation satisfied by
$\tG(u,y)-\tG(\bu,y)$. It is readily solved, and  the initial
condition~\eqref{init-1}  yields
\beq\label{tGsol1}
\tG(u,y)-\tG(\bu,y)
=(1+u)e^{yz(1+u)(1+\bu)}\left(e^{-zu}-\bu e^{-z\bu}\right).
\eeq
\subsubsection{Reconstruction of $\tG(u,y)$}
Recall that $\tG(u,y)\equiv \tG(z;u,y)$ is a series in $z$ with
polynomial coefficients in $u$ and $y$. Hence, by extracting from the
above equation the positive part in $u$ (as defined at the end of
Section~1), we obtain  
$$
\tG(u,y)-\tG(0,y)=[u^>]\left((1+u)e^{yz(1+u)(1+\bu)}\left(e^{-zu}-\bu
e^{-z\bu}\right)\right).
$$
For any Laurent polynomial $P$, we have
\beq\label{1+u}
[u^>](1+u)P(u)= (1+u)[u^>]P(u)+u [u^0]P(u).
\eeq
Hence
\begin{multline*}
  \tG(u,y)-\tG(0,y)
=(1+u)[u^>]\left(e^{yz(1+u)(1+\bu)}\left(e^{-zu}-\bu
e^{-z\bu}\right)\right)
\\
+u[u^0]\left(e^{yz(1+u)(1+\bu)}\left(e^{-zu}-\bu
e^{-z\bu}\right)\right).
\end{multline*}
Setting $u=-1$ in this equation gives, since $\tG(-1,y)=0$, 
$$
-\tG(0,y)
=-[u^0]\left(e^{yz(1+u)(1+\bu)}\left(e^{-zu}-\bu e^{-z\bu}\right)\right),
$$
so that finally,
\begin{eqnarray}
\tG(u,y) 
&=&(1+u)[u^>]\left(e^{yz(1+u)(1+\bu)}\left(e^{-zu}-\bu 
e^{-z\bu}\right)\right) \nonumber
\\
&& \ \hskip 50mm +(1+u)[u^0]\left(e^{yz(1+u)(1+\bu)}\left(e^{-zu}-\bu
e^{-z\bu}\right) \right) \nonumber
\\
&=&(1+u)[u^\ge]\left(e^{yz(1+u)(1+\bu)}\left(e^{-zu}-\bu
e^{-z\bu}\right)\right). \label{Gsol-1}
\end{eqnarray}
As explained in
Section~\ref{sec:principle}, $\tG(u,y)=G(u,y)$ will be proved if we
establish that 
$\tG(u,1)=G_1(u)$. This is the final step of our proof.
\subsubsection{The case $y=1$}
\label{sec:m=y=1}
 Equation~\eqref{Gsol-1} completely describes the solution of~\eqref{ED-uy}. It remains to check that 
$\tG(u,1)=G_1(u)$, that is
\beq\label{G11conj}
\tG(u,1)= (1+u)e^{2z}\left(1+\frac{1-e^{zu}}u\right).
\eeq
Let us set $y=1$ in~\eqref{Gsol-1}. We find
\begin{eqnarray*}
  \tG(u,1)&=&(1+u)[u^\ge]\left(e^{z(2+\bu)}-\bu e^{z(2+u)}\right)\\
&=& (1+u) e^{2z}\left( 1- \frac{e^{zu}-1}u\right),
\end{eqnarray*}
which coincides with~\eqref{G11conj}. Hence
$\tG(z;u,y)=G(z;u,y)=F(t;x,y)$ (with the change 
of variables~\eqref{t-x-param}), and Theorem~\ref{thm:1} is proved.

\subsubsection{The trivariate series}
 We have now proved that $\tG(u,y)=G(u,y)$, so that
$F(x,y)=\tG(u,y)$ after the change of variables~\eqref{t-x-param-1}.  The expression~\eqref{F-param-y1} of 
$F(x,y)$ given in Theorem~\ref{thm:1} now follows from~\eqref{Gsol-1}.

\section{Solution of the functional equation: the general case}
\label{sec:sol}
We now adapt to the general case the solution described for $m=1$ in
Section~\ref{subsec:sol1}.

\subsection{A homogeneous differential equation and its solution}

Let us return to the equation \eqref{eq:Gtilde} satisfied by $\tilde G(u,y)$.
 The
coefficient of $\tG(u,y)$ in the right-hand side of this equation
is  $zv(u)$, where
$$
v(u)= (1+u)e^{-mzu}A(u)^{-m}=(1+u)^{m+1} \bu^m .$$
 In the case $m=1$, this (Laurent) polynomial was
$(1+u)(1+\bu)$, 
and took the same value for $u$ and $\bu$. 
We are again interested in the series $u_i$ such that $v(u_i)=v(u)$.
\begin{Lemma}\label{lem:ui}
  Denote $v\equiv v(u)=(1+u)^{m+1} u^{-m}$, and  consider the
  following polynomial equation in $U$:
 \beq\label{poluv-bis}
(1+U)^{m+1}=U^m v .
\eeq
This equation has no double root. We denote its $m+1$ roots by $ u_0=u,
u_1, \dots, u_m$.
\end{Lemma}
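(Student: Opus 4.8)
The plan is to show that the polynomial $P(U) = (1+U)^{m+1} - U^m v$ has no repeated root, where $v = (1+u)^{m+1} u^{-m}$ is treated as a fixed quantity (a Laurent series in $z$ with polynomial coefficients, or simply a transcendental parameter over $\qs$). Since $P$ has degree $m+1$ in $U$, it then has exactly $m+1$ distinct roots, one of which is visibly $U=u$; these are the $u_0 = u, u_1, \dots, u_m$ in the statement. The standard tool is: a polynomial has a repeated root if and only if it shares a common root with its derivative.

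\medskip
First I would compute $P'(U) = (m+1)(1+U)^m - m U^{m-1} v$. Suppose $U_*$ is a common root of $P$ and $P'$. From $P(U_*)=0$ we get $(1+U_*)^{m+1} = U_*^m v$, and in particular $U_* \neq 0$ (else $(1+U_*)^{m+1}=1$ forces $U_*=0$ consistently, but then $P'(0) = (m+1) \neq 0$, so $U_*=0$ is not a double root; handle this degenerate case separately and discard it) and $1+U_* \neq 0$. Then from $P'(U_*)=0$ we get $(m+1)(1+U_*)^m = m U_*^{m-1} v$. Dividing the first relation by the second (both sides nonzero) eliminates $v$:
\[
\frac{(1+U_*)^{m+1}}{(m+1)(1+U_*)^m} = \frac{U_*^m v}{m U_*^{m-1} v},
\]
i.e. $\dfrac{1+U_*}{m+1} = \dfrac{U_*}{m}$, which gives $m(1+U_*) = (m+1)U_*$, hence $U_* = m$. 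So the \emph{only} candidate for a double root is $U_* = m$, independent of $v$.

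\medskip
Now I would rule this out by checking that $U = m$ is not actually a root of $P$ for our specific value of $v$. Plugging $U=m$ into $P$ gives the requirement $(1+m)^{m+1} = m^m v$, i.e. $v = (m+1)^{m+1} m^{-m}$, a constant in $\qs$. But $v = v(u) = (1+u)^{m+1} u^{-m}$ where, after the substitution, $u$ is an indeterminate (equivalently, $v$ is a nonconstant Laurent series in $z$ once one uses the parametrization, or one argues that $v(u)$ is a nonconstant rational function of $u$): one checks $(1+u)^{m+1} u^{-m} = (m+1)^{m+1} m^{-m}$ is not an identity in $u$ — e.g. compare behaviour as $u \to 0$, or simply note that $v(u) - (m+1)^{m+1}m^{-m}$ is a nonzero Laurent polynomial in $u$. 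Hence $v \neq (m+1)^{m+1} m^{-m}$, so $U=m$ is not a root of $P$, so $P$ and $P'$ have no common root, so $P$ has no double root. Therefore $P$ has $m+1$ distinct roots, and since $P(u) = (1+u)^{m+1} - u^m v(u) = 0$ by definition of $v$, one of them is $u_0 = u$; label the rest $u_1, \dots, u_m$.

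\medskip
The main (and really only) obstacle is the bookkeeping about what ring/field $v$ lives in and in what sense "no double root" is meant: one wants the $u_i$ to be well-defined, ideally as Puiseux or power series so that later manipulations (extracting $[u^\ge]$, etc.) make sense. I expect the cleanest route is to first do the discriminant computation treating $v$ as a transcendental parameter over $\qs$ (which is what the argument above does), concluding $\mathrm{disc}_U(P) \neq 0$ in $\qs(v)$; then specialize $v = v(u)$ and observe that the discriminant, being a nonzero polynomial in $v$, remains nonzero after this specialization since $v(u)$ is non-constant — equivalently it remains a unit times a nonzero Laurent series in $z$ after the change of variables. That handles the only subtlety; everything else is the short elimination computation shown above.
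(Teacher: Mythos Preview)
Your proof is correct and follows exactly the same approach as the paper: the paper simply notes that a double root would also satisfy $(m+1)(1+U)^m = mU^{m-1}v$ and declares this ``easily shown to be impossible,'' whereas you carry out the elimination explicitly to find that the only candidate is $U_*=m$, which would force $v=(m+1)^{m+1}m^{-m}$, a constant, contradicting the fact that $v=(1+u)^{m+1}u^{-m}$ is nonconstant. Your added discussion of the ambient ring for $v$ is also in line with the paper, which treats the $u_i$ as elements of an algebraic extension of $\qs(u)$.
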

\begin{proof}
A double root of~\eqref{poluv-bis} would also satisfy
$$
(m+1) (1+u)^m =mu^{m-1}v,
$$
and this is easily shown to be impossible.
\end{proof}

\noindent{\bf Remark.} One can of course express the $u_i$'s as
Puiseux series in $u$ (see~\cite[Ch.~6]{stanley-vol2}), but this will
not be needed here, and we will think of them as abstract elements of an
 algebraic extension of $\qs(u)$. In fact, all the series in $z$ that
involve the $u_i$'s in this paper have coefficients that are \emm symmetric rational
functions, of the $u_i$'s, and hence, rational functions of $v$. At
some point, we will have to prove that a symmetric polynomial in the
$u_i$'s (and thus a polynomial in $v$) vanishes at $v=0$, that is, at
$u=-1$, and we will  consider series expansions of the $u_i$'s around $u=-1$.

\begin{Proposition}\label{prop:combi-lin-m}
 Denote $v=(1+u)^{m+1}
u^{-m}$, and let the series $u_i$ be defined as above. Denote
$A_i=A(u_i)$, where $A(u)$ is given by~\eqref{A-def}. Then 
\begin{eqnarray}\label{eq:combi-lin-y}
\sum_{i=0}^m \frac{\tilde G(u_i,y)}{\prod_{j\neq i} (A_i-A_j)} = v e^{zvy}.
\end{eqnarray}
\end{Proposition}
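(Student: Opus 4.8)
The plan is to imitate the elimination step that worked for $m=1$, but now using all $m+1$ conjugate roots $u_0,\dots,u_m$ of the equation $(1+U)^{m+1}=U^m v$ simultaneously, rather than just the pair $\{u,\bar u\}$. The key observation is that the right-hand side of~\eqref{eq:Gtilde} depends on $u$ only through $v=v(u)$, in the sense that the differential operator acting on $\tG$ is built from $\Lambda$, whose denominator $A(u)$ and whose numerator structure interact with $v$ in a controlled way. So I would first record precisely how the iterated operator $\Lambda^{(m)}$ acts: for a series $H$ in $z$ with polynomial coefficients in $u$, one has a partial-fraction-type identity expressing $\Lambda^{(m)}H(u)$ as a linear combination of the values $H(u_i)$. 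Concretely, because the $u_i$ are the roots of $(1+U)^{m+1}-U^m v=0$ and this polynomial has no double root (Lemma~\ref{lem:ui}), the function $u\mapsto \Lambda^{(m)}H(u)$, which is a rational function of $u$ with poles only where $A(u)=A(u_i)$, admits the expansion built from the Lagrange-type weights $1/\prod_{j\ne i}(A_i-A_j)$. The first step is therefore to establish the operator identity
\[
z(1+u)e^{-mzu}\,\Lambda^{(m)}H(u) \;=\; zv\Bigl(\,\text{appropriate symmetric combination of the }H(u_i)\,\Bigr),
\]
or more precisely to check that applying the right-hand side of~\eqref{eq:Gtilde} and then forming $\sum_i \tG(u_i,y)/\prod_{j\ne i}(A_i-A_j)$ collapses the nonlocal term.

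Granting that, the second step is exactly parallel to the $m=1$ computation. Write $\Phi(y)=\sum_{i=0}^m \tG(u_i,y)/\prod_{j\ne i}(A_i-A_j)$. Differentiating in $y$ and substituting~\eqref{eq:Gtilde} at each $u=u_i$, the contributions of $\tG(0,y)$ (the sole nonlocal term, since $\Lambda H$ subtracts only $H(0)$) must cancel in the symmetric sum: this is the multivariate analogue of the fact that for $m=1$ the coefficient $(1+u)(1+\bar u)$ took the same value at $u$ and $\bar u$. Here the relevant cancellation is the classical identity $\sum_{i=0}^m 1/\prod_{j\ne i}(A_i-A_j)=0$ (valid since there are $m+1\ge 2$ nodes), together with the fact that the iterated operator produces, besides the clean $zv\,\tG(u_i,y)$ term, only multiples of $\tG(0,y)$ times weights that sum to zero after the Lagrange normalization. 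This should leave the scalar homogeneous ODE $\Phi'(y)=zv\,\Phi(y)$, whence $\Phi(y)=\Phi(0)e^{zvy}$. Finally I would evaluate $\Phi(0)$ using the initial condition~\eqref{init:Gtilde}: $\tG(u_i,0)=(1+u_i)e^{-mzu_i}$, and I claim
\[
\sum_{i=0}^m \frac{(1+u_i)e^{-mzu_i}}{\prod_{j\ne i}(A_i-A_j)} = v,
\]
which is again a Lagrange-interpolation identity — the left side is the leading-type symmetric function of the $u_i$ picked out by these weights, and because $A_i=\tfrac{u_i}{1+u_i}e^{-zu_i}$ while $(1+u_i)e^{-mzu_i}=v A_i^{m}$ (using $(1+u_i)^{m+1}=u_i^m v$), the sum becomes $v\sum_i A_i^m/\prod_{j\ne i}(A_i-A_j)=v$, the divided difference of $A^m$ over $m+1$ nodes being $1$.

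The main obstacle, I expect, is the first step: proving cleanly that the iterated operator $\bigl(\tfrac{uG_1(u)}{(1+u)e^{-mzu}-1}\Delta_u\bigr)^{(m)}$, equivalently $\Lambda^{(m)}$, when combined with the prefactor $z(1+u)e^{-mzu}$ and summed against the Lagrange weights, reduces exactly to $zv\,\tG(u_i,y)$ plus terms that die in the symmetric sum. One has to track carefully what each application of $\Lambda$ does: $\Lambda H(u)=(H(u)-H(0))/A(u)$ introduces a simple structure, but iterating it $m$ times produces, after clearing denominators, a rational function whose partial-fraction decomposition over the poles $u=u_i$ must be identified. The cleanest route is probably to verify that the operator $H\mapsto z(1+u)e^{-mzu}\Lambda^{(m)}H$ has the property that $\sum_i (\cdot)(u_i)/\prod_{j\ne i}(A_i-A_j)$ annihilates $H(0)$-contributions and returns $zv$ times $\sum_i H(u_i)/\prod_{j\ne i}(A_i-A_j)$; this is really a statement about divided differences of the function $u\mapsto A(u)^{-m}$ relative to the nodes $A_0,\dots,A_m$, combined with $v(u)=(1+u)e^{-mzu}A(u)^{-m}$ taking the common value $v$ at every $u_i$. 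Once that algebraic lemma is in place, the rest is the routine ODE-solving and interpolation bookkeeping sketched above.
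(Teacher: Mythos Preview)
Your overall architecture is right and is exactly the paper's: form the Lagrange-weighted sum
\[
\Phi(y)=\sum_{i=0}^m \frac{\tG(u_i,y)}{\prod_{j\ne i}(A_i-A_j)},
\]
show that the nonlocal pieces cancel so that $\Phi'(y)=zv\,\Phi(y)$, and then evaluate $\Phi(0)$ from the initial condition. Your computation of $\Phi(0)$ is correct and is literally the paper's: $(1+u_i)e^{-mzu_i}=vA_i^m$ combined with~\eqref{eq:Lagrange-inv} gives $\Phi(0)=v$.

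The genuine gap is in your middle paragraph. You write that after iterating $\Lambda$, ``the iterated operator produces, besides the clean $zv\,\tG(u_i,y)$ term, only multiples of $\tG(0,y)$'', and that the single identity $\sum_i 1/\prod_{j\ne i}(A_i-A_j)=0$ suffices to kill them. That is false for $m>1$. Each application of $\Lambda$ subtracts a \emph{new} constant: $\Lambda^{(k+1)}H(u)=\bigl(\Lambda^{(k)}H(u)-\Lambda^{(k)}H(0)\bigr)/A(u)$, so after $m$ iterations there are $m$ distinct nonlocal series $g_j(y):=\Lambda^{(j)}\tG(0,y)$, $0\le j\le m-1$, not just $\tG(0,y)$. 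Unwinding gives (this is Lemma~\ref{lemma:implicit-functions} in the paper)
\[
\Lambda^{(m)}\tG(u,y)=\frac{1}{A(u)^m}\Bigl(\tG(u,y)-\sum_{j=0}^{m-1}g_j(y)A(u)^j\Bigr),
\]
and hence~\eqref{eq:Gtilde} becomes $\partial_y\tG(u,y)=zv\,\tG(u,y)-zv\,Q(A(u))$ with $Q$ a polynomial of degree $<m$. In the weighted sum you therefore need the \emph{full} Lagrange identity~\eqref{eq:Lagrange-poly}, namely $\sum_i Q(A_i)/\prod_{j\ne i}(A_i-A_j)=0$ for every polynomial $Q$ of degree $<m$, not just the degree-$0$ case. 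Once you replace your ``only $\tG(0,y)$'' by ``a polynomial of degree $<m$ in $A(u)$'' and invoke~\eqref{eq:Lagrange-poly}, your sketch becomes a correct proof, identical to the paper's. The partial-fraction/pole language in your first and last paragraphs is a red herring: $\Lambda^{(m)}H(u)$ has no poles at the $u_i$, and no partial-fraction decomposition over those nodes is needed.
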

\noindent By $\prod_{j\neq i} (A_i-A_j)$ we mean $\prod_{0\le j \le m, j\neq i}
(A_i-A_j)$ but we prefer the shorter notation when the bounds
on $j$ are clear.
 Observe that the $A_i$'s are distinct since the $u_i$'s are distinct
(the coefficient of $z^0$ in $A(u)$ is $1/(1+\bu)$). Note also that
when $m=1$, then $u_0=u$, $u_1= \bu$, and~\eqref{eq:combi-lin-y}
coincides with~\eqref{tGsol1}. 
In order to prove the proposition, we need the following  two lemmas.
\begin{Lemma}\label{lemma:Lagrange}
Let $x_0, x_1, \dots , x_m$ be $m+1$  variables. Then
\beq\label{eq:Lagrange-inv}
\sum_{i=0}^m \frac{{x_i}^m}{\prod_{j\neq i}{(x_i-x_j)}} = 1
\eeq
and
\beq\label{eq:Lagrange-inv-bis}
\sum_{i=0}^m \frac{1/x_i}{\prod_{j\neq i}{(x_i-x_j)}} = (-1)^{m} \prod_{i=0}^m
\frac{1}{x_i}.
\eeq
Moreover, for any polynomial $Q$ of degree less than $m$,
\begin{eqnarray}\label{eq:Lagrange-poly}
\sum_{i=0}^m \frac{Q(x_i)}{\prod_{j\neq i}{(x_i-x_j)}} = 0.
\end{eqnarray}
\end{Lemma}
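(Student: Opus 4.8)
Let me sketch the proof of the three identities.

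The plan is to treat all three identities as instances of Lagrange interpolation on the $m+1$ points $x_0,\dots,x_m$ (which we may assume distinct, since the identities are polynomial identities in the $x_i$ and the generic case suffices). Recall that for any polynomial $P$ of degree at most $m$, Lagrange interpolation gives
$$
P(x) = \sum_{i=0}^m P(x_i) \prod_{j\neq i} \frac{x-x_j}{x_i-x_j}.
$$
Comparing the coefficient of $x^m$ on both sides yields the master identity
$$
\sum_{i=0}^m \frac{P(x_i)}{\prod_{j\neq i}(x_i-x_j)} = [x^m]P(x),
$$
valid for $\deg P \le m$.

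With this master identity in hand, each of the three claims is immediate. For~\eqref{eq:Lagrange-poly}, take $P=Q$ with $\deg Q < m$; then $[x^m]Q = 0$. For~\eqref{eq:Lagrange-inv}, take $P(x)=x^m$; then $[x^m]P = 1$. The third identity~\eqref{eq:Lagrange-inv-bis} is the only one not of this immediate form, since $1/x_i$ is not a polynomial value; here I would instead apply Lagrange interpolation to the polynomial $P(x) = \prod_{i=0}^m (x-x_i)$ divided appropriately, or more directly argue as follows: the rational function $x \mapsto 1/x$ has partial fraction expansion governed by the points $x_i$ only after multiplying by $\prod(x-x_i)$, so I consider $R(x) = \bigl(\prod_{k=0}^m(x - x_k)\bigr)/x$, a polynomial of degree $m$ with $[x^m]R = 1$ (monic) and $R(0) = \prod_{k=0}^m(-x_k) = (-1)^{m+1}\prod x_k$ — wait, that is not quite what is needed. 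Cleaner: apply the master identity to $P(x) = \prod_{k=0}^m (x-x_k)\big/(x-x_i)$ — no. The cleanest route is: the function $1/x$ can be written via Lagrange interpolation through $x_0,\dots,x_m$ plus a remainder, namely $\frac1x = \sum_i \frac{1/x_i}{\prod_{j\neq i}(x_i-x_j)}\prod_{j\neq i}(x-x_j) + \frac{(-1)^{m+1}\prod_k(x-x_k)}{x\,\prod_k x_k}$, since the displayed remainder vanishes at each $x_i$ and has the right behaviour at $x=0$; comparing coefficients of $x^m$ then gives $\sum_i \frac{1/x_i}{\prod_{j\neq i}(x_i-x_j)} + \frac{(-1)^{m+1}}{\prod_k x_k} = 0$, which rearranges to~\eqref{eq:Lagrange-inv-bis}.

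I expect no real obstacle here: this is a standard computation, and the only mild subtlety is the bookkeeping of signs and the handling of the non-polynomial identity~\eqref{eq:Lagrange-inv-bis}, for which the remainder-term argument above is the safe route. One should also remark that although the $x_i$ are introduced as formal variables, it suffices to verify each identity when they are distinct indeterminates over $\qs$, since both sides are rational functions and the claimed equalities are then identities in $\qs(x_0,\dots,x_m)$.
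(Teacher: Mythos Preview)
Your proof is correct and rests on the same idea as the paper's: the Lagrange interpolation formula
\[
R(X)=\sum_{i=0}^m R(x_i)\prod_{j\neq i}\frac{X-x_j}{x_i-x_j}
\]
for $\deg R\le m$. For~\eqref{eq:Lagrange-inv} you and the paper do exactly the same thing (take $R(X)=X^m$ and read off the leading coefficient). For the other two identities the paper takes a slightly different and shorter route: rather than your ``master identity'' $\sum_i P(x_i)/\prod_{j\neq i}(x_i-x_j)=[x^m]P$, it simply \emph{evaluates} the interpolation formula at $X=0$. With $R(X)=XQ(X)$ this gives~\eqref{eq:Lagrange-poly} directly, and with $R(X)=1$ it gives~\eqref{eq:Lagrange-inv-bis} in one line, since $\prod_{j\neq i}(-x_j)=(-1)^m(\prod_k x_k)/x_i$. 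This avoids the remainder-term construction you use for~\eqref{eq:Lagrange-inv-bis}, which is correct but noticeably more laboured (and your exposition of it shows some hesitation). Both arguments are standard; the paper's specialization at $X=0$ is just the cleaner bookkeeping.
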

\begin{proof}
By the Lagrange interpolation, any polynomial $R$ of degree at most $m$
satisfies:
$$
R(X)=\displaystyle\sum_{i=0}^m {R(x_i)}
\prod_{j\neq i}\frac{X-x_j}{x_i-x_j}.
$$
Equations \eqref{eq:Lagrange-inv-bis} and~\eqref{eq:Lagrange-poly} follow  by
evaluating this equation at $X=0$, respectively with $R(X)=1$
and $R(X)=X Q(X)$.
Equation~\eqref{eq:Lagrange-inv} is obtained by taking $R(X)=X^m$ and
extracting the leading coefficient. 
\end{proof}

\begin{Lemma}\label{lemma:implicit-functions} 
Let $H(u)\equiv H(z;u)\in \GK(u) [[ z]]$ 
be a formal power series in $z$ whose coefficients are rational functions in
$u$ over some field $\GK$ of
characteristic $0$. Assume that these coefficients have no pole at $u=0$.
Then there exists a sequence  $g_0(z), g_1(z), \dots $ of formal power series 
in $z$ such that for every $k\geq 0$ one has:
\beq\label{eq:Lambdaiter-A}
\Lambda^{(k)} H(u)= \frac{1}{A(u)^k} 
\left(H(u) - \sum_{j=0}^{k-1} g_j(z)A(u)^{j}\right).
\eeq
where $\Lambda$ is the operator defined
by~\eqref{Lambda-def}. 
\end{Lemma}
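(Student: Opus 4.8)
The plan is to prove~\eqref{eq:Lambdaiter-A} by induction on $k$, the base case $k=0$ being the empty statement $\Lambda^{(0)}H(u)=H(u)$. The first thing I would check is that the operator $\Lambda$ is well-defined on the class of series under consideration: if $H(u)\in\GK(u)[[z]]$ has coefficients with no pole at $u=0$, then $H(u)-H(0)$ vanishes at $u=0$, and since $A(u)=\frac{u}{1+u}e^{-zu}$ has a simple zero at $u=0$ (its coefficient of $z^0$ is $u/(1+u)$, which is a unit in the relevant local ring), the quotient $\Lambda(H)(u)=\big(H(u)-H(0)\big)/A(u)$ is again a series in $z$ whose coefficients are rational functions of $u$ with no pole at $u=0$. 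Here $H(0)$ means the substitution $u=0$, which is legitimate coefficientwise; note $H(0)\equiv H(0;z)$ is a formal power series in $z$. So the class is stable under $\Lambda$ and the iterates make sense.

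For the inductive step, assume~\eqref{eq:Lambdaiter-A} holds for $k$, with series $g_0,\dots,g_{k-1}$. Apply $\Lambda$ once more. Writing $K(u):=\Lambda^{(k)}H(u)=A(u)^{-k}\big(H(u)-\sum_{j=0}^{k-1}g_j A(u)^j\big)$, I compute
\[
\Lambda(K)(u)=\frac{K(u)-K(0)}{A(u)}.
\]
The point is to evaluate $K(0)$: since $A(u)^{-k}\big(H(u)-\sum_{j<k}g_j A(u)^j\big)$ has no pole at $u=0$ by the induction hypothesis, its value at $u=0$ is a well-defined power series in $z$, which I name $g_k(z)$. Then
\[
\Lambda(K)(u)=\frac{1}{A(u)}\left(\frac{H(u)-\sum_{j=0}^{k-1}g_j(z)A(u)^j}{A(u)^k}-g_k(z)\right)
=\frac{1}{A(u)^{k+1}}\left(H(u)-\sum_{j=0}^{k}g_j(z)A(u)^j\right),
\]
which is exactly~\eqref{eq:Lambdaiter-A} with $k$ replaced by $k+1$. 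This closes the induction, and the $g_j$'s produced along the way are independent of $k$ because at each stage $g_k$ is defined purely in terms of $H$ and $g_0,\dots,g_{k-1}$.

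The only genuinely delicate point — really the crux — is making sure all the divisions by $A(u)$ are legal inside $\GK(u)[[z]]$ and that evaluations at $u=0$ are well-defined at every stage; this is handled once and for all by the observation that $A(u)$ factors as $u$ times a unit of the coefficient ring, so dividing a series that vanishes at $u=0$ by $A(u)$ stays inside the class with the pole-freeness at $u=0$ preserved. A clean way to organize this is to prove, as part of the induction, the slightly stronger statement that $\Lambda^{(k)}H(u)$ itself lies in $\GK(u)[[z]]$ with coefficients regular at $u=0$, so that the induction hypothesis feeds the next step without extra work. Everything else is the routine algebraic manipulation displayed above. I do not foresee any obstacle beyond this bookkeeping; the lemma is essentially a formal unwinding of the definition of $\Lambda$, engineered precisely so that iterating it produces the geometric-progression correction terms $\sum_j g_j(z)A(u)^j$.
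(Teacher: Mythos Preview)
Your proof is correct and follows the same inductive scheme as the paper's: both argue that the class of series in $\GK(u)[[z]]$ with coefficients regular at $u=0$ is stable under $\Lambda$ (because $A(u)=u\cdot(\text{unit})$), and then establish~\eqref{eq:Lambdaiter-A} by induction on $k$, setting $g_k:=\Lambda^{(k)}H(0)$. Your version is in fact slightly more streamlined than the paper's, which first proves an auxiliary expansion $H(u)=\sum_{j<\ell} g_j(z)A(u)^j+O(u^\ell)$ before running the same induction; you correctly observe that this detour is unnecessary, since defining $g_k$ on the fly as $\Lambda^{(k)}H(0)$ already does the job.
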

\begin{proof}
We denote by $\mathcal{L}$ the subring of $\GK(u)[[z]] $
  formed by formal power series whose coefficients have no pole at $u=0$. 
  By assumption, $H(u)\in \mathcal{L}$.
  We  use the notation $O(u^k)$   to denote an element of $\GK(u)[ [z]]$ of the
  form $u^k J(z;u)$ with $J(z;u)\in \mathcal{L}$.

  First, note that  $A(u)=ue^{-zu}/(1+u)$
belongs to $\GK(u)[[ z ]] $. Moreover, 
  \beq\label{eq:Adev}
   A(u)=u+ O(u^2).
   \eeq  

  We will first prove 
that there exists a sequence of formal power series
  $(g_j)_{j\geq0}\in\GK[ [ z] ]^\ns$ such that for all $\ell\geq 0$, 
  \begin{eqnarray}\label{eq:implicit-functions}
    H(u) = \sum_{j=0}^{\ell-1} g_j(z) A(u)^j + O(u^{\ell}).
  \end{eqnarray}
We will then prove that these series $g_j$ satisfy~\eqref{eq:Lambdaiter-A}.
In order to prove~\eqref{eq:implicit-functions}, we proceed by
induction on $\ell \ge 0$.   The identity holds for $\ell=0$ since
$H(u)\in\mathcal{L}$. Assume it holds for some $\ell\geq
  0$, \emm i.e.,, that there exists 
series $g_0, \ldots, g_{\ell-1}$ in $\GK[[z]]$
  and $J(u)\in\mathcal{L}$ such that
  \begin{eqnarray*}
    H(u) = \sum_{j=0}^{\ell-1} g_j(z) A(u)^j + u^{\ell} J(u).
  \end{eqnarray*}
  By~\eqref{eq:Adev} and by induction on $r$, we have 
  $u^r = A(u)^r +O(u^{r+1})$ for all $r\geq 0$. 
Using this identity with $r=\ell$, and rewriting $J(u)=J(0)+O(u)$,
we obtain $u^\ell   J(u) = J(0) A(u)^\ell +O(u^{\ell+1})$,
  so  that:
  \begin{eqnarray*}
    H(u) = \sum_{j=0}^{\ell} g_j(z) A(u)^j + O(u^{\ell+1}),
  \end{eqnarray*}
  with $g_\ell(z):=J(0)$. Thus~\eqref{eq:implicit-functions} holds for $\ell+1$.

\smallskip
  Let us now prove~\eqref{eq:Lambdaiter-A}. Again, we proceed by
  induction on $k\ge 0$. The identity  clearly holds for $k=0$. Assume
  it holds for some $k\geq 0$. 
In~\eqref{eq:Lambdaiter-A}, replace
  $H(u)$ by its expression~\eqref{eq:implicit-functions} obtained with
  $\ell=k+1$, and let $u$ tend to $0$: this  shows that
  $g_k(z)$ is in fact $\Lambda^{(k)}H(0)$. From the definition of
  $\Lambda$ one then   obtains 
$$
\Lambda^{(k+1)}H
  (u)=\frac{\Lambda^{(k)}H(u)-g_k(z)}{A(u)}=\frac{1}{A(u)^{k+1}}\left(H(u)
    - \sum_{j=0}^{k} g_j(z)A(u)^{j}\right).
$$
  Thus~\eqref{eq:Lambdaiter-A} holds for $k+1$.
\end{proof}

\begin{proof}[Proof of Proposition~{\rm\ref{prop:combi-lin-m}}]
Thanks to Lemma~\ref{lemma:implicit-functions} (applied with 
$\GK=\mathbb{Q}(y)$), we can
rewrite~\eqref{eq:Gtilde} as
\begin{eqnarray}\label{eq:Gy-poly}
\frac{\partial \tilde G}{\partial y} (u,y)=
z v \tilde G(u,y) - zv \sum_{j=0}^{m-1} g_j(y)A(u)^j,
\end{eqnarray}
with $v=(1+u)^{m+1}\bu^m$, and for all $j\geq 0$ the series $g_j(y)\equiv
g_j(z;y)$
belongs to $\GK(y)[ [ z ]
] $ (one can actually show that $g_j(y)\in \GK[y][ [ z ] ] $ but we will not
need this). 
As was done in Section~\ref{sec:m11}, we are going to use the fact that
$v(u_i)=v$ for all  $i\in\llbracket 0, m\rrbracket$ to eliminate the  $m$
unknown series $g_j(y)$. For $0\le i \le m$,  the
substitution $u\mapsto u_i$ in~\eqref{eq:Gy-poly} gives:
\begin{eqnarray}\label{eq:Gy-poly-ui}
\frac{\partial \tilde G}{\partial y} (u_i,y)=
z v \tilde G(u_i,y) - zv \sum_{j=0}^{m-1} g_j(y)A_i^j=
z v \tilde G(u_i,y) - zv Q(A_i)
\end{eqnarray}
where $Q(X)= \sum_{j=0}^{m-1} g_j(y)X^{j}$ 
is a polynomial in $X$ of degree less than $m$. 
 Consider the linear combination
\beq\label{combine}
L(u,y):=
\sum_{i=0}^m \frac{\tilde G(u_i,y)}{\prod_{j\neq i}{(A_i-A_j)}}.
\eeq
Then   by~\eqref{eq:Gy-poly-ui},
$$
\begin{array}{llll}
\displaystyle \frac{\partial L}{\partial y} (u,y)&=&
\displaystyle zv L(u,y) - 
zv \sum_{i=0}^m \frac{Q(A_i)}{\prod_{j\neq i}{(A_i-A_j)}} &
\\
&=&
\displaystyle zv L(u,y) &  \hskip 10mm \hbox{ by~\eqref{eq:Lagrange-poly}.}
\end{array}
$$
This homogeneous linear differential equation is readily solved:
$$
L(u,y)=L(u,0) e^{zvy}.
$$
Recall the expression~\eqref{combine} of $L$ in terms of $\tG$.
The initial 
condition~\eqref{init:Gtilde} can be rewritten $\tG(u,0)=v A(u)^m$, which yields
$$
\begin{array}{llll}
L(u,0)& =& \displaystyle v\sum_{i=0}^m \frac{ {A_i}^m}{\prod_{j\neq i}{(A_i-A_j)}} 
\\
&=& v & 
\end{array}
$$
by~\eqref{eq:Lagrange-inv}. Hence $L(u,y)=ve^{zyv}$, and the proposition is proved.
\end{proof}

\subsection{Reconstruction of $\tG(u,y)$}
We are now going to prove that~\eqref{eq:combi-lin-y}, together with the
condition $\tG(-1,y)=0$ derived from~\eqref{eq:Gtilde}, characterizes the
series $\tG(u,y)$. We will 
actually obtain 
a (complicated) expression for
this series, generalizing~\eqref{Gsol-1}. 

We first introduce some notation. Consider a \fps\ in $z$, denoted $H(z;u)\equiv
H(u)$, having coefficients in $\GK[u]$
  for some field  $\GK$ of characteristic $0$ (for instance
  $\qs(y)$).
We define a series $H_k$ in $z$ whose coefficients are symmetric
functions  of $k+1$ variables $x_0, \ldots, x_k$: 
$$
H_k(x_0, \ldots, x_k)= \sum_{i=0}^k \frac{H(x_i)}{\displaystyle \prod_{0\le j \le k,
    j\not = i} (A(x_i)-A(x_j))},
$$
where, as above, $A$ is defined by~\eqref{A-def}.
\begin{Lemma}\label{lem:Hk}
  The series $H_k(x_0, \ldots, x_k)$ has coefficients in $\GK[x_0,
    \ldots , x_k]$. If, moreover, $H(-1)=0$,
then
the coefficients of $H_k$ are multiples of $(1+x_0) \cdots (1+x_k)$.
\end{Lemma}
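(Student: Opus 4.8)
The plan is to prove the two assertions of Lemma~\ref{lem:Hk} separately, working with the expansion of each coefficient of $H_k$ into partial fractions with respect to the variables $A(x_i)$. First I would fix a power of $z$, say $z^n$, and let $h(x)$ denote the coefficient of $z^n$ in $H(x)$; this is a polynomial in $x$ over $\GK$. Because $A(x)=x e^{-zx}/(1+x)$, the quantity $A(x_i)-A(x_j)$, expanded in $z$, has the form $(x_i-x_j)\big(1+O(z)\big)$ up to the harmless factors $1/(1+x_i)$, $1/(1+x_j)$; more precisely one checks that $A(x_i)-A(x_j)=\dfrac{x_i-x_j}{(1+x_i)(1+x_j)}\,R_{ij}(z)$ where $R_{ij}$ is a power series in $z$ with constant term $1$. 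So each coefficient of $H_k$ in $z$ is obtained from the ``classical'' symmetric sum $\sum_i h(x_i)/\prod_{j\ne i}(x_i-x_j)$ by multiplying the $i$-th term by a rational function of the $x_\ell$'s that is regular whenever the $x_\ell$ are pairwise distinct, and then taking a finite $\GK$-linear combination over lower powers of $z$.

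The heart of the matter is the standard fact (a form of which is essentially Lemma~\ref{lemma:Lagrange}) that for any polynomial $h$, the divided-difference-type sum
\[
\sum_{i=0}^k \frac{h(x_i)}{\prod_{j\ne i}(x_i-x_j)}
\]
is itself a polynomial in $x_0,\dots,x_k$: indeed it is the $k$-th order divided difference $h[x_0,\dots,x_k]$, which is symmetric and polynomial, of degree $\deg h - k$ (and $0$ if $\deg h<k$). The point to push through is that the extra rational factors coming from the $R_{ij}(z)$ do not destroy polynomiality. The cleanest way is to argue that $H_k$, \emph{a priori} a symmetric rational function of the $x_\ell$ with coefficients in $\GK$, can only have poles along the diagonals $x_i=x_j$ (the apparent poles at $x_i=-1$ cancel, and $A$ introduces no other singularity at the level of formal power series in $z$), and then to show each diagonal pole is in fact removable: near $x_i=x_j$ the two terms indexed by $i$ and $j$ combine, and because $A$ is a \emph{fixed} analytic function of $x$ (as a power series in $z$), $(A(x_i)-A(x_j))\to 0$ with the same order as $(x_i-x_j)$, so the singularity is exactly as in the classical divided difference and cancels. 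Hence $H_k$ has coefficients in $\GK[x_0,\dots,x_k]$.

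For the second assertion, suppose $H(-1)=0$, i.e.\ $h(-1)=0$ for every coefficient $h$. Then I would evaluate $H_k$ at $x_0=-1$: in the sum defining $H_k$, the term $i=0$ vanishes because $h(x_0)=h(-1)=0$, while each term with $i\ge 1$ contains the factor $A(x_i)-A(x_0)=A(x_i)-A(-1)$. Now $A(-1)=0$ (the factor $\tfrac{x}{1+x}e^{-zx}$ has a zero from $x$ at... more carefully, $A(u)=\tfrac{u}{1+u}e^{-zu}$, and at $u=-1$ the factor $1/(1+u)$ blows up, so one must instead use the combined-term argument). The correct route is: by symmetry it suffices to show $(1+x_0)$ divides $H_k$; equivalently that $H_k$ vanishes at $x_0=-1$ as a power series in $z$. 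Group the $i=0$ term with any fixed $i\ne 0$, or better, use that $H_k(x_0,\dots,x_k)$ equals the divided-difference-type expression and specialize: since $h(-1)=0$, write $h(x)=(1+x)\tilde h(x)$; substituting and using that $A(x_i)-A(x_j)$ carries a compensating $(1+x_i)(1+x_j)$ in the denominator as noted above, one extracts a global factor $(1+x_0)$ (and, by symmetry, $(1+x_1)\cdots(1+x_k)$). Iterating over all the variables gives the divisibility by $(1+x_0)\cdots(1+x_k)$.

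The main obstacle I anticipate is the bookkeeping in this last extraction: one must be careful that factoring $(1+x_i)$ out of $h(x_i)$ for \emph{every} $i$ simultaneously, while the denominators $\prod_{j\ne i}(A(x_i)-A(x_j))$ only supply a compensating $(1+x_i)(1+x_j)$ per pair, still leaves a net factor $(1+x_0)\cdots(1+x_k)$ rather than a negative power — a parity/counting check on how many $(1+x_i)$ factors appear in numerator versus denominator. Keeping the argument at the level of formal power series in $z$ (so that $A$ is a genuine element of $\GK(x)[[z]]$ with $A(x)=x+O(z)$ after clearing $(1+x)$, cf.\ \eqref{eq:Adev}) rather than worrying about convergence is what makes the pole-cancellation rigorous; this mirrors exactly the role played by Lemma~\ref{lemma:implicit-functions} earlier.
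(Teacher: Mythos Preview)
Your approach uses the same two ingredients as the paper: the factorization
\[
\frac{1}{A(x_i)-A(x_j)}=\frac{(1+x_i)(1+x_j)}{x_i-x_j}\,C(x_i,x_j),
\]
with $C$ a series in $z$ having polynomial coefficients in $x_i,x_j$ (your $R_{ij}^{-1}$), together with symmetry of $H_k$. The paper's execution is cleaner in two spots that correspond exactly to where your sketch is hesitant.

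For the first assertion, rather than arguing that each diagonal pole is removable (which, as you implicitly sense, requires some care to turn into a global statement), the paper multiplies $H_k$ by the Vandermonde $\prod_{i<j}(x_i-x_j)$. Using only the weaker form $\frac{1}{A(x_i)-A(x_j)}=\frac{B(x_i,x_j)}{x_i-x_j}$, every term in the resulting sum has polynomial coefficients, so $H_k\cdot\prod_{i<j}(x_i-x_j)$ is a polynomial; since $H_k$ is symmetric this product is anti-symmetric, hence divisible by the Vandermonde. Polynomiality of $H_k$ follows at once.

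For the second assertion, your counting is in fact correct: writing $H(x)=(1+x)K(x)$, the $i$-th term becomes $K(x_i)(1+x_i)^{k+1}\prod_{j\ne i}\frac{(1+x_j)C(x_i,x_j)}{x_i-x_j}$, so every term carries $\prod_{\ell}(1+x_\ell)$. But you cannot ``extract a global factor'' directly, because the leftover sum still has the diagonal denominators and is not itself polynomial. The paper sidesteps this (and your worry about $A(-1)$): since part one already shows $H_k$ is a polynomial, it suffices to set $x_0=-1$ in the displayed expression. For generic $x_1,\dots,x_k$ no diagonal is hit, every term is regular there and visibly vanishes (via its $(1+x_0)$ factor), so $H_k(-1,x_1,\dots,x_k)=0$. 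Hence $(1+x_0)\mid H_k$, and symmetry gives divisibility by the full product $(1+x_0)\cdots(1+x_k)$.
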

\begin{proof}
  Observe that
\beq\label{AB}
\frac 1 {A(x_i)-A(x_j)}= \frac 1{x_i-x_j} B(x_i,x_j),
\eeq
where $B(x_i,x_j)$ is a series in $z$ with \emm polynomial,
coefficients in $x_i$ and $x_j$.
Hence 
$$
 H_k(x_0, \ldots, x_k) \prod_{0\le i<j\le k} (x_i-x_j)
$$
has  polynomial coefficients in the $x_i$'s. But these polynomials are
anti-symmetric in the $x_i$'s (since $H_k$ is symmetric), hence they must be
multiples of the Vandermonde $\prod_{i<j} (x_i-x_j)$. Hence $H_k(x_0,
\ldots, x_k)$ has polynomial coefficients.

\medskip
A stronger property than~\eqref{AB} actually holds, namely:
$$
\frac 1 {A(x_i)-A(x_j)}= \frac {(1+x_i)(1+x_j)}{x_i-x_j}\, C(x_i,x_j),
$$
where $C(x_i,x_j)$ is a series in $z$ with  polynomial
coefficients in $x_i$ and $x_j$. Hence, if $H(x)=(1+x)K(x)$,
$$
H_k(x_0, \ldots, x_k)= \sum_{i=0}^k 
{K(x_i)}(1+x_i)^{k+1}\prod_{j\not = i}\frac{(1+x_j)C(x_i,x_j)}{ x_i-x_j}.
$$
Setting $x_0=-1$ shows that $H_k(-1,x_1, \ldots, x_k)=0$, so that
$H_k(x_0, \ldots, x_k)$ is a multiple of $(1+x_0)$. By symmetry, it is
also a multiple of all $(1+x_i)$, for $1\le i \le k$.
\end{proof}

Our treatment of~\eqref{eq:combi-lin-y} actually applies to equations
with an arbitrary right-hand side. We consider a  \fps\ series  $H(z;u)\equiv
H(u)$ with coefficients in $\GK[u]$,
  satisfying $H(-1)=0$ and 
$$
\sum_{i=0}^m \frac{H(u_i)}{\prod_{j\neq i} (A_i-A_j)} = \Phi_m(v),
$$
for some series $\Phi_m(v)\equiv\Phi_m(z;v)$ with coefficients in
$v\GK[v]$, where $v=(1+u)^{m+1}\bu^m$.
Using the above notation, this equation can be rewritten as
$$
H_m(u_0, \ldots, u_m)= \Phi_m(v) .
$$
We will give an explicit expression of $H(u)$ involving two standard
families of symmetric functions~\cite[Chap.~7]{stanley-vol2}, namely the
homogeneous functions $h_\lambda$ 
and the monomial functions $m_\lambda$. We  denote by
$\ell(\lambda)$ the number of parts in a
partition $\lambda$, and  by $\Sn_{m+1}$ the symmetric group on
$\{0,1, \ldots, m\}$.
\begin{Proposition}\label{prop:extraction}
  Let $H(z;u)\equiv H(u)$ be a power series in $z$ with coefficients in $\GK[u]$,
  satisfying $H(-1)=0$ and
\beq\label{eq-sym}
H_m(u_0, \ldots, u_m)= \Phi_m(v) ,
\eeq
where $\Phi_m(v)\equiv\Phi_m(z;v)$ is a series in $z$ with coefficients in
$v\GK[v]$. 

There exists a unique sequence $\Phi_0, \ldots, \Phi_m$ of series in $z$ with
coefficients in $v\GK[v]$ such that  for $0\le k \le m$, and for all
\p\ $\sigma\in \Sn_{m+1}$, 
\beq\label{eq-sym-k}
H_k(u_{\sigma(0)}, \ldots, u_{\sigma(k)})= \sum_{j=k}^m \Phi_j(v)
h_{j-k}(A_{\sigma(0)}, \ldots, A_{\sigma(k)}).
\eeq
In particular, $H(u)\equiv H_0(u)$ is completely determined:
\beq\label{Hsol}
H(u)= \sum_{j=0}^m \Phi_j(v) A(u)^j.
\eeq
The series   $\Phi_k(v)\equiv \Phi_k(z;v)$  can be computed by a
descending induction on $k$ as follows. Let us denote by
$\Phi_{k-1}^>(u)$ the positive part in $u$ of $\Phi_{k-1}(v)$, that is
$$
\Phi_{k-1}^>(u):=  [u^{> }]\Phi_{k-1}(\bu^m(1+u)^{m+1}).
$$
Then for $1\le k\le m$, this series can be expressed in terms of
$\Phi_k, \ldots, \Phi_m$:
\begin{eqnarray}
\Phi_{k-1}^>(u)&=&
-\frac 1{{m\choose k} }[u^{>}] \left(\sum_{j=k}^m \Phi_j(v) \sum_{\lambda \vdash j-k+1} {m-\ell(\lambda)
  \choose k -\ell(\lambda)} m_\lambda(A_1, \ldots, A_m)\right),\label{phi-rec}
\end{eqnarray}
and $\Phi_{k-1}(v)$ can be expressed in terms of $\Phi_{k-1}^>$: 
\beq\label{Phi-k}
\Phi_{k-1}(v)= \sum_{i=0}^m \left(  \Phi_{k-1}^>(u_i)-  \Phi_{k-1}^>(-1)\right).
\eeq
\end{Proposition}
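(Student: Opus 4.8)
The plan is to prove Proposition~\ref{prop:extraction} by a descending induction on $k$ that simultaneously establishes the family of identities~\eqref{eq-sym-k} and the recursive formulas~\eqref{phi-rec} and~\eqref{Phi-k}. The base case $k=m$ is precisely the hypothesis~\eqref{eq-sym}, with $\Phi_m$ the given series, and there $h_{j-k}$ with $j=k$ is the empty product $1$. The inductive step will reconstruct $H_{k-1}$ from $H_k$ by ``integrating'' the divided-difference structure, using that each $u_i$ satisfies $v(u_i)=v$; the delicate point throughout is that $\Phi_{k-1}$ must end up a series in $z$ with coefficients in $v\GK[v]$, and it is Lemma~\ref{lem:Hk} that guarantees the polynomiality needed to take non-negative parts in $u$.

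\textbf{The inductive step.} Assume~\eqref{eq-sym-k} holds for some $k$ with $1\le k\le m$, for all $\sigma\in\Sn_{m+1}$. First I would observe the elementary combinatorial identity relating $H_k$ and $H_{k-1}$: from the definition of $H_k$ as a sum of Lagrange-type terms, collapsing the variable $x_k$ against the others yields
\[
H_k(x_0,\dots,x_k) = \frac{H_{k-1}(x_0,\dots,x_{k-1}) - H_{k-1}(x_1,\dots,x_{k-1},x_k)\ \text{(and cyclic variants)}}{\cdots},
\]
more precisely the relation $H_k(x_0,\dots,x_k)=\Delta_{A}\, H_{k-1}$ where $\Delta_A$ is the divided difference in the $A$-variables with respect to the last two arguments. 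Inverting this means writing $H_{k-1}(u_{\sigma(0)},\dots,u_{\sigma(k-1)})$ as $A_{\sigma(k-1)}^{\,?}$ times known data plus a term independent of the last variable; plugging in the inductive expression for $H_k$ and using the classical identity $h_{r}(x_0,\dots,x_k) = \sum_{s=0}^{r} x_k^{s}\, h_{r-s}(x_0,\dots,x_{k-1})$ (so that $\Delta_A$ acting on $h_r$ in the $A$-variables is $h_{r-1}$) lets me peel off exactly one power of $A$ at a time. The upshot is that $H_{k-1}(u_{\sigma(0)},\dots,u_{\sigma(k-1)}) - \sum_{j=k}^m \Phi_j(v) h_{j-k+1}(A_{\sigma(0)},\dots,A_{\sigma(k-1)})$ is symmetric in the $k$ chosen variables and, crucially, annihilated by $\Delta_A$ in every pair of them, hence is a series in $z$ whose coefficients depend only on $v$ — call it $\Phi_{k-1}(v)$, after checking via Lemma~\ref{lem:Hk} and the normalization $\Phi_{k-1}(-1)=0$ that it lies in $v\GK[v]$. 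This is~\eqref{eq-sym-k} at level $k-1$, and specializing $k-1=0$ gives~\eqref{Hsol}.

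\textbf{The explicit recursion for $\Phi_{k-1}$.} To get~\eqref{phi-rec}, I would take~\eqref{eq-sym-k} at level $k-1$ with a well-chosen $\sigma$ — say the one putting $u_0=u$ in the first slot and $u_1,\dots,u_{k-1}$ in the others — and then \emph{sum over all $\binom{m}{k-1}$ choices} of the remaining $k-1$ indices from $\{1,\dots,m\}$. On the left this produces $\binom{m}{k-1}$ copies of $H(u)$ divided by suitable $A$-differences, plus lower terms; on the right, $h_{j-k+1}$ of a sub-multiset of $\{A_1,\dots,A_m\}$, summed over all sub-multisets of size $k$, reorganizes — by the standard expansion of $h_\lambda$ in the monomial basis and a counting of how often each monomial of type $\lambda\vdash j-k+1$ appears — into the stated combination $\sum_{\lambda\vdash j-k+1}\binom{m-\ell(\lambda)}{k-\ell(\lambda)} m_\lambda(A_1,\dots,A_m)$. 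Isolating $H(u)$ and extracting the positive part in $u$ (legitimate since all coefficients are polynomials in $u$, by Lemma~\ref{lem:Hk}) kills $H(0)$ and the constant-in-$u$ pieces, leaving~\eqref{phi-rec} with the factor $\binom{m}{k}$ on the left after reindexing. Finally~\eqref{Phi-k} follows by the same ``symmetrize over the $u_i$'' trick applied to $\Phi_{k-1}^>$ itself: since $\Phi_{k-1}(v)$ is the full non-negative part of the series in $u$ whose positive part is $\Phi_{k-1}^>(u)$, and since $\Phi_{k-1}(v(u_i))=\Phi_{k-1}(v)$ for every $i$, averaging $\Phi_{k-1}^>(u_i)$ over $i=0,\dots,m$ and correcting by the constant term (evaluated at $u=-1$, where $v=0$, using $\Phi_{k-1}(0)=0$) recovers $\Phi_{k-1}(v)$ exactly. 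Uniqueness of the whole sequence $\Phi_0,\dots,\Phi_m$ is immediate from the descending induction, since each $\Phi_{k-1}$ is forced by $\Phi_k,\dots,\Phi_m$ through~\eqref{phi-rec}--\eqref{Phi-k}.

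\textbf{Main obstacle.} The routine parts are the symmetric-function manipulations (the $h$-to-$m$ expansion and the binomial bookkeeping in~\eqref{phi-rec}) and the linear-algebra inversion of the $\Delta_A$ relation. The genuinely delicate step is proving that $\Phi_{k-1}(v)$, once defined as the symmetric remainder, actually has coefficients in $v\GK[v]$ rather than merely in $\GK[v]$ or in a Laurent ring: one must check both polynomiality in $v$ (no denominators $u$ survive) and vanishing at $v=0$. Polynomiality is where Lemma~\ref{lem:Hk}, in the strong form giving divisibility by $\prod(1+x_i)$, does the work; the vanishing at $v=0$ requires expanding the $u_i$ as Puiseux series around $u=-1$ — exactly the situation flagged in the Remark after Lemma~\ref{lem:ui} — and verifying that the apparent constant term of the symmetric function in question cancels. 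I expect the bookkeeping of this boundary analysis, kept uniform in $k$, to be the technical heart of the argument.
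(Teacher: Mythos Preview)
Your overall strategy --- descending induction on $k$ via the divided-difference relation between $H_k$ and $H_{k-1}$ --- is the paper's, and you correctly flag the $v\GK[v]$ verification as the technical heart. But the derivation of~\eqref{phi-rec} has a genuine gap. You specialize~\eqref{eq-sym-k} at level $k-1$ with $u_0=u$ in the first slot and sum over the $\binom{m}{k-1}$ choices of the remaining arguments from $\{u_1,\dots,u_m\}$. With this choice the $H_{k-1}$ side becomes $\sum_V H_{k-1}(u,u_V)$, a series whose coefficients are Laurent polynomials in $u$ with nontrivial positive part: nothing makes ``isolating $H(u)$ and extracting $[u^>]$'' kill these terms. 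Moreover the $h_{j-k+1}$'s on the other side then involve $A_0=A(u)$, so their monomial expansion lives in $m_\lambda(A_0,\dots,A_m)$, not the $m_\lambda(A_1,\dots,A_m)$ of~\eqref{phi-rec}; and $\binom{m}{k-1}$ does not ``reindex'' into $\binom{m}{k}$. The paper does the opposite symmetrization: it sums~\eqref{eq-sym-k} at level $k-1$ over all $k$-subsets $V\subset\{1,\dots,m\}$, \emph{excluding $u_0$ altogether}. The left side is then $\binom{m}{k}\Phi_{k-1}(v)$, and the sum $\sum_V H_{k-1}(u_V)$ is symmetric in $u_1,\dots,u_m$ alone, hence by the second part of Lemma~\ref{lem:elem} a polynomial in $\bar u$ --- so its positive part in $u$ is zero. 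Only the terms $\Phi_j\sum_V h_{j-k+1}(A_V)$ survive the extraction of $[u^>]$, and the identity $\sum_{|V|=k} h_{r}(a_V)=\sum_{\lambda\vdash r}\binom{m-\ell(\lambda)}{k-\ell(\lambda)}m_\lambda(a_1,\dots,a_m)$ then gives~\eqref{phi-rec} directly.

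Two smaller points. In the inductive step, ``annihilated by $\Delta_A$ in every pair, hence depends only on $v$'' skips a step: what the divided-difference argument actually yields is that the expression takes the same value at every $k$-tuple drawn from $\{u_0,\dots,u_m\}$, and a separate symmetrization argument (Lemma~\ref{lem:sym} in the paper) is then needed to conclude it is rational in $v$. And in the $v\GK[v]$ verification, Lemma~\ref{lem:Hk} handles only the $H_{k-1}$ piece; the $\Phi_j h_{j-k+1}$ piece carries denominators $(1+u_i)$ through $A_i$, and the paper treats it separately by rewriting $1/(1+u_i)=v^{-1}\prod_{\ell\ne i}(1+u_\ell)$ and then using the Puiseux expansion $1+u_i=\xi^i(1+u)+o(1+u)$ near $u=-1$ to show that no pole survives and the result vanishes there.
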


We first establish three lemmas dealing with the symmetric functions of
the series $u_i$ defined  in Lemma~\ref{lem:ui}.
\begin{Lemma}\label{lem:elem}
  The elementary symmetric functions of $u_0=u, u_1, \ldots, u_m$ are 
$$
e_{j}(u_0,u_1, \ldots, u_m)= 
(-1)^{j}{m+1\choose j} + v{\mathbbm 1}_{j=1}
$$
with $v=u^{-m}(1+u)^m$.

The elementary symmetric functions of $u_1, \ldots, u_m$ are
$$
e_{m-j}(u_1, \ldots, u_m)=(-1)^{m-j-1}\sum_{p=0}^j{m+1 \choose p} u^{p-j-1}.
$$
In particular, they are polynomials in $1/u$, and so is any symmetric
polynomial in $u_1, \ldots, u_m$.

Finally,
$$
\prod_{i=0}^m(1+u_i)=v.
$$
\end{Lemma}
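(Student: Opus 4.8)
The plan is to prove the three assertions of Lemma~\ref{lem:elem} by direct computation with the defining polynomial~\eqref{poluv-bis}, i.e. with the fact that $u_0,\dots,u_m$ are exactly the roots of
\[
(1+U)^{m+1}=U^m v.
\]
First I would expand this equation as a polynomial identity in $U$. Writing $(1+U)^{m+1}=\sum_{j=0}^{m+1}\binom{m+1}{j}U^j$, the equation $(1+U)^{m+1}-vU^m=0$ becomes a monic (after dividing by the leading coefficient $1$ of $U^{m+1}$) degree-$(m+1)$ polynomial whose coefficient of $U^{m+1-j}$ is $\binom{m+1}{j}$ for $j\neq m$, and $\binom{m+1}{m}-v=(m+1)-v$ for $j=m$. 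By the usual relation between coefficients and elementary symmetric functions of the roots, $e_j(u_0,\dots,u_m)$ equals $(-1)^j$ times the coefficient of $U^{m+1-j}$; this gives $e_j=(-1)^j\binom{m+1}{j}$ for $j\neq 1$, and $e_1=-\big((m+1)-v\big)=-\binom{m+1}{1}+v$, which is exactly the claimed formula with the indicator $\mathbbm 1_{j=1}$. The last assertion $\prod_{i=0}^m(1+u_i)=v$ then follows immediately: $\prod_i(1+u_i)$ is the value of the monic polynomial $\prod_i(U-u_i)$ at $U=-1$ up to sign, but more directly it is $(1+u_i)^{m+1}\cdot u_i^{-m}\cdot$ — no; cleaner: substitute $U=u_i$ into $(1+u_i)^{m+1}=u_i^m v$ and take the product over $i$, giving $\prod_i(1+u_i)^{m+1}=v^{m+1}\prod_i u_i^m$, and since $\prod_i u_i=e_{m+1}=(-1)^{m+1}$ — wait, that has the wrong sign; instead I would simply evaluate: $\prod_{i=0}^m(1+u_i)=(-1)^{m+1}P(-1)$ where $P(U)=\prod(U-u_i)=U^{m+1}+\dots$, and $P(-1)=(1+(-1))^{m+1}-v(-1)^m=0-(-1)^m v=(-1)^{m+1}v$, so $\prod(1+u_i)=(-1)^{m+1}(-1)^{m+1}v=v$. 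That settles the first and third claims.

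For the second claim, about $e_{m-j}(u_1,\dots,u_m)$, the natural route is to divide: $\prod_{i=1}^m(U-u_i)=P(U)/(U-u)$ where $P(U)=(1+U)^{m+1}-vU^m=\prod_{i=0}^m(U-u_i)$. Performing the polynomial division of $P(U)$ by $(U-u)$ expresses the coefficients of the quotient as polynomials in $u$ (with $v=u^{-m}(1+u)^m$ substituted), and matching the coefficient of $U^{m-(m-j)}=U^j$ in the quotient against $(-1)^{m-j}e_{m-j}(u_1,\dots,u_m)$ yields the stated formula $e_{m-j}(u_1,\dots,u_m)=(-1)^{m-j-1}\sum_{p=0}^j\binom{m+1}{p}u^{p-j-1}$. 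Alternatively, and perhaps more transparently, I would use Newton/Girard-type identities or the generating-function identity $\sum_{j\ge 0}e_j(u_1,\dots,u_m)U^{m-j}=P(U)/(U-u)$ and expand $1/(U-u)$ as a series; but the cleanest is to verify the claimed formula by checking that $(U-u)\sum_{k}c_kU^k=P(U)$ for the proposed coefficients $c_k$, which is a telescoping computation using $u^{m+1}v=(1+u)^{m+1}$ — equivalently, using $(1+u)^{m+1}=\sum_p\binom{m+1}{p}u^p$. Once this explicit expression is in hand, the ``in particular'' statements are automatic: each $e_{m-j}(u_1,\dots,u_m)$ is visibly a polynomial in $1/u$, and since every symmetric polynomial in $u_1,\dots,u_m$ is a polynomial in the $e_j(u_1,\dots,u_m)$, it too is a polynomial in $1/u$.

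The main obstacle is purely bookkeeping: getting the division $P(U)/(U-u)$ right and matching indices so that the exponent $p-j-1$ and the sign $(-1)^{m-j-1}$ come out exactly as stated (it is easy to be off by one in the summation range or off by a sign in the Vieta conventions). I would organize the computation by writing $P(U)=(U-u)\,R(U)$ with $R(U)=\sum_{k=0}^m r_k U^k$, then solve the triangular linear system $r_{k-1}-u\,r_k=[U^k]P(U)$ recursively downward from $r_m=1$, and finally read off $e_{m-j}(u_1,\dots,u_m)=(-1)^{m-j}r_j$; substituting $[U^k]P(U)=\binom{m+1}{k}$ for $k\ne m$ and $[U^m]P(U)=\binom{m+1}{m}-v$ for $k=m$, and then replacing $v$ by $u^{-m}(1+u)^m$ where it appears, should collapse the recursion to the closed form claimed. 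No genuinely hard idea is needed beyond Vieta's formulas and careful polynomial division.
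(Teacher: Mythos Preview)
Your approach is correct and essentially identical to the paper's: Vieta on $(1+U)^{m+1}-vU^m$ for the first claim, evaluation at $U=-1$ for the third, and polynomial division by $(U-u)$ for the second (the paper clears denominators first, writing the quotient as $\dfrac{(1+U)^{m+1}u^m-(1+u)^{m+1}U^m}{U-u}$, which sidesteps the Laurent-polynomial intermediate step but is otherwise the same computation). One small slip: in your first paragraph the exceptional coefficient is at $U^m$, i.e.\ at $j=1$ in your indexing, not $j=m$---but your conclusion $e_1=-\binom{m+1}{1}+v$ is correct regardless.
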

\begin{proof}
The symmetric functions of the roots of a polynomial can be read from
the coefficients of this polynomial. Hence the first result follows
directly from the equation satisfied by the  
$u_i$'s, for $0\le i\le m$, namely  
$$
(1+u_i)^{m+1}=v u_i^m.
$$
 For the second one, we need to find the equation satisfied by $u_1, \ldots,
u_m$, which is 
  $$
0= \frac{(1+u_i)^{m+1}u^m -(1+u)^{m+1}u_i^m}{u_i-u}=
 u^m u_i^m -\sum_{j=0}^{m-1} u_i^j u^{m-j-1} \sum_{p=0}^j{m+1 \choose p} u^{p}.
$$
The second result follows.

The third one is obtained by evaluating at $X=-1$
the identity
$$
\prod_{i=0}^{m}(X-u_i)=(1+X)^{m+1}-v X^m.
$$
\end{proof}
\begin{Lemma}\label{P-reconstruct}
  Denote $v=\bu^m(1+u)^{m+1}$. Let $P$ be a polynomial. Then $P(v)$ is a
  Laurent polynomial in $u$. Let $P^>(u)$ denote its positive part:
$$
P^>(u):=[u^>]P(v).
$$
Then
\beq\label{P-dev}
P(v)= P(0)+\sum_{i=0}^m (P^>(u_i)-P^>(-1)).
\eeq
\end{Lemma}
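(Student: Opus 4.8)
The first assertion is immediate: since $v=\bu^m(1+u)^{m+1}$ lies in $\qs[u,\bu]$, so does $P(v)$ for every polynomial $P$. Decompose this Laurent polynomial according to the sign of the exponent of $u$, writing $P(v)=P^>(u)+c_0+P^<(u)$, where $c_0=[u^0]P(v)$ and $P^<(u)$ is a polynomial in $\bu$ with no constant term. The whole lemma will follow once one proves the single identity
$$
\sum_{i=0}^m P^>(u_i)=P(v)-c_0 .
$$
Indeed, granting this, the right-hand side of the asserted formula becomes $P(0)+\sum_i P^>(u_i)-(m+1)P^>(-1)=P(v)+\bigl(P(0)-c_0-(m+1)P^>(-1)\bigr)$, so it differs from $P(v)$ by a quantity independent of $v$; and on setting $v=0$, where the polynomial $(1+U)^{m+1}-vU^m$ of Lemma~\ref{lem:ui} degenerates to $(1+U)^{m+1}$ and hence all the $u_i$ become $-1$, the sum $\sum_i\bigl(P^>(u_i)-P^>(-1)\bigr)$ vanishes while $P(v)$ becomes $P(0)$, so that constant must be $0$.

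To prove the displayed identity I would proceed in three steps. First, both sides lie in $\qs[v]$: the left-hand side because it is a symmetric polynomial in $u_0,\dots,u_m$, hence a polynomial in their elementary symmetric functions, which are polynomials in $v$ by Lemma~\ref{lem:elem}. Second, since each $u_i$ satisfies $v(u_i)=v$, substituting $U=u_i$ into the one-variable identity of Laurent polynomials $P\bigl(\bar U^m(1+U)^{m+1}\bigr)=P^>(U)+c_0+P^<(U)$ gives $P^>(u_i)=P(v)-c_0-P^<(u_i)$ for every $i$. Third, analyse the roots asymptotically: specializing $v$ to a large real number (or, more algebraically, passing to the field of Puiseux series in $v^{-1/m}$, where the Newton polygon of $(1+U)^{m+1}-vU^m$ displays the same splitting), a Rouché argument comparing $(1+U)^{m+1}-vU^m$ with $-vU^m$ on the circles $|U|=1$ and $|U|=2v^{-1/m}$ shows that exactly one root $u^\ast$ escapes to infinity while the remaining $m$ roots $u_1,\dots,u_m$ tend to $0$. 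As $P^<$ is a polynomial in $\bu$ without constant term one gets $P^<(u^\ast)\to 0$, and as $P^>$ is a polynomial without constant term one gets $P^>(u_r)\to 0$ for $r=1,\dots,m$; hence
$$
\sum_{i=0}^m P^>(u_i)-\bigl(P(v)-c_0\bigr)=-P^<(u^\ast)+\sum_{r=1}^m P^>(u_r)\ \longrightarrow\ 0 .
$$
Being a polynomial in $v$ that tends to $0$ as $v\to\infty$, the left-hand side is identically zero, which is the required identity.

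The main obstacle is this third step — establishing the dichotomy that, among the $m+1$ roots of $(1+U)^{m+1}-vU^m$, exactly one blows up and the other $m$ collapse to $0$; once this is in hand the rest is bookkeeping around the decomposition $P(v)=P^>(u)+c_0+P^<(u)$. A more computational alternative would be to reduce the displayed identity by linearity to $P=X^n$, where $P^>(u)=\sum_{\ell=1}^n\binom{(m+1)n}{n-\ell}u^\ell$, and to evaluate $\sum_i P^>(u_i)$ using $\prod_i(1-u_iz)=(1+z)^{m+1}-vz$ — which gives the generating function $\sum_{\ell\ge1}\bigl(\sum_i u_i^\ell\bigr)z^\ell=-z\tfrac{d}{dz}\log\bigl((1+z)^{m+1}-vz\bigr)$ — together with the binomial identities $\sum_{j\le k}(-1)^j\binom Nj=(-1)^k\binom{N-1}{k}$ and $\binom Nk=\tfrac Nk\binom{N-1}{k-1}$; this confirms $\sum_i P^>(u_i)=v^n-\binom{(m+1)n}{n}=v^n-[u^0]v^n$, but it is heavier on calculation.
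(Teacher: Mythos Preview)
Your argument is correct, but the route differs from the paper's. You reduce to the identity $\sum_{i=0}^m P^>(u_i)=P(v)-[u^0]P(v)$ and prove it by analysing the asymptotics of the roots as $v\to\infty$ (one root escapes, $m$ roots collapse to $0$), then conclude by the usual ``polynomial vanishing at infinity is zero'' trick. The paper instead compares positive parts in $u$: since any symmetric polynomial in $u_1,\dots,u_m$ is a polynomial in $\bar u$ (second part of Lemma~\ref{lem:elem}), the sum $\sum_{i\ge 1}(P^>(u_i)-P^>(-1))$ contributes nothing to $[u^>]$, so the right-hand side of~\eqref{P-dev} has the same positive part in $u$ as $P^>(u_0)=P^>(u)=[u^>]P(v)$. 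Hence $Q:=P-\tilde P$ satisfies $\deg_u Q(v)\le 0$, forcing $Q$ constant, and evaluation at $v=0$ kills the constant.

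What each buys: the paper's argument is shorter and stays purely algebraic, leaning entirely on Lemma~\ref{lem:elem}; your approach requires the extra (though elementary) root-splitting analysis, but in exchange yields the sharper intermediate identity $\sum_i P^>(u_i)=P(v)-[u^0]P(v)$ directly rather than via a degree comparison. Your computational alternative via power sums is also valid and gives yet a third proof.
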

\begin{proof}
The right-hand side of~\eqref{P-dev} is a symmetric polynomial of
$u_0, \ldots, u_m$, and thus, by the first part of
Lemma~\ref{lem:elem}, a polynomial in $v$. Denote it by $\tilde
P(v)$. The second part of
Lemma~\ref{lem:elem} implies that the positive part of $\tilde
P(v)$ in $u$ is
$P^>(u_0)=P^>(u)$. That is, $P(v)$ and $\tilde P(v)$ have the same 
positive part in $u$. In other words, the polynomial
$Q:=P-\tilde P$ is such that $Q(v)$ is a Laurent polynomial in $u$ of
non-positive degree. But since $v=(1+u)^{m+1} \bu^m$, the degree in
$u$ of $Q(v)$ 
coincides with the degree of $Q$, and so $Q$  must be a
constant. Finally, by setting $u=-1$ in $\tilde P(v)$, we see that
$\tilde P(0)= P(0)$ (because  $u_i=-1$ for all $i$ when
$u=-1$, as follows for instance from Lemma~\ref{lem:elem}). Hence
$Q=0$ and the lemma is proved. 
\end{proof}
\begin{Lemma}\label{lem:sym}
  Let $0\le k\le m$, and let $R(x_0, \ldots, x_k)$ be a rational function
  in $k+1$ variables $x_0, \ldots, x_k$, such that for any \p\ $\sigma
  \in \Sn_{m+1}$,
$$
R(u_0, \ldots, u_k)= R(u_{\sigma(0)}, \ldots, u_{\sigma(k)}).
$$
Then there exists a rational fraction in $v$ equal
to  $R(u_0, \ldots, u_k)$.
\end{Lemma}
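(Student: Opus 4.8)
The plan is to reduce the statement to the fundamental theorem of symmetric functions, applied not to all of $u_0,\dots,u_m$ but only to the subset $u_0,\dots,u_k$, and then to control the extra variables $u_{k+1},\dots,u_m$ using the fact that $u_0,\dots,u_m$ are \emph{all} the roots of the polynomial equation~\eqref{poluv-bis}, whose coefficients depend only on $v$.

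First I would note that by hypothesis $R(u_0,\dots,u_k)$ is invariant under every permutation of $\{0,1,\dots,m\}$. In particular it is invariant under the subgroup $\Sn_{k+1}\times\Sn_{m-k}$ that permutes $\{0,\dots,k\}$ and $\{k+1,\dots,m\}$ separately; but also under transpositions swapping an index $\le k$ with an index $>k$. Invariance under $\Sn_{k+1}$ acting on $x_0,\dots,x_k$ means that, after clearing denominators, $R(u_0,\dots,u_k)$ can be written as a rational function of the elementary symmetric functions $e_1(u_0,\dots,u_k),\dots,e_{k+1}(u_0,\dots,u_k)$ — here I use the fundamental theorem of symmetric functions over the field $\GK(u_{k+1},\dots,u_m)$, or more cleanly over $\GK$ with $u_{k+1},\dots,u_m$ treated symbolically. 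So it suffices to show that each $e_j(u_0,\dots,u_k)$, for $1\le j\le k+1$, is a rational function of $v$.

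The key step is then the following: since $u_0,\dots,u_m$ are exactly the roots of $(1+U)^{m+1}=U^m v$, the polynomial $\prod_{i=0}^m (X-u_i)=(1+X)^{m+1}-vX^m$ has coefficients in $\qs[v]$. Performing the Euclidean division of $(1+X)^{m+1}-vX^m$ by $\prod_{i=0}^k(X-u_i)$, one sees that the quotient $\prod_{i=k+1}^m(X-u_i)$ has coefficients that are polynomials in $v$ and in $e_1,\dots,e_k$ (where $e_j:=e_j(u_0,\dots,u_k)$), obtained by the usual recursion of polynomial long division; equivalently, the elementary symmetric functions of $u_{k+1},\dots,u_m$ lie in $\qs[v,e_1,\dots,e_{k}]$ — or rather in $\qs[v][e_1,\dots,e_k,1/e_k]$ if one prefers, but division is polynomial here so no inversion is needed. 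Now observe that $R(u_0,\dots,u_k)$, being fixed by all of $\Sn_{m+1}$, is a symmetric function of the full tuple $u_0,\dots,u_m$ (any permutation of the $u_i$'s leaves it invariant, since it does not depend on $u_{k+1},\dots,u_m$ at all and the action on $u_0,\dots,u_k$ is covered by the hypothesis). By the fundamental theorem applied to all $m+1$ variables together with Lemma~\ref{lem:elem}, it is therefore a rational function of $v$. That is essentially the whole argument.

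The part requiring care — the main obstacle — is the bookkeeping with the ground field and denominators: a priori $R$ is only a rational function, so "symmetric function" must be interpreted as "ratio of symmetric polynomials", and one must check that no spurious denominator prevents expressing the result as a rational function of $v$ alone. Concretely, if $R=N/D$ with $N,D$ polynomials, symmetrizing numerator and denominator separately over $\Sn_{m+1}$ (by summing over cosets) keeps the value of $R$ unchanged because $R$ is already invariant, and then $N$ and $D$ individually become symmetric polynomials in $u_0,\dots,u_m$, hence polynomials in $v$ by Lemma~\ref{lem:elem}. One must also observe that the coefficients here are formal power series in $z$, not constants; but all the manipulations (symmetrization, the fundamental theorem, Lemma~\ref{lem:elem}) are $\GK[[z]]$-linear, so this causes no trouble. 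Thus $R(u_0,\dots,u_k)$ equals a rational function of $v$, as claimed.
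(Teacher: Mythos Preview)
Your proof eventually lands on the paper's argument, but only after a detour built on a false claim.

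The sentence ``So it suffices to show that each $e_j(u_0,\dots,u_k)$, for $1\le j\le k+1$, is a rational function of $v$'' is simply wrong when $k<m$. Take $m=1$, $k=0$: then $e_1(u_0)=u$, and the relation $(1+u)^2=uv$ exhibits $u$ as a degree-$2$ (not degree-$1$) extension of $\qs(v)$, so $u\notin\qs(v)$. The polynomial-division paragraph does not repair this: it shows the \emph{opposite} inclusion, namely that the $e_j(u_{k+1},\dots,u_m)$ lie in the ring generated by $v$ and the $e_j(u_0,\dots,u_k)$, which is true but useless for what you claimed. This entire first half should be discarded.

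What actually proves the lemma is your later sentence: ``$R(u_0,\dots,u_k)$, being fixed by all of $\Sn_{m+1}$, is a symmetric function of the full tuple $u_0,\dots,u_m$\dots\ by the fundamental theorem applied to all $m+1$ variables together with Lemma~\ref{lem:elem}, it is therefore a rational function of $v$.'' This is exactly the paper's proof, which makes it precise by writing down
\[
\tilde R(x_0,\dots,x_m)=\frac{1}{(m+1)!}\sum_{\sigma\in\Sn_{m+1}}R(x_{\sigma(0)},\dots,x_{\sigma(k)}),
\]
a genuinely symmetric rational function of $m+1$ variables satisfying $\tilde R(u_0,\dots,u_m)=R(u_0,\dots,u_k)$ by hypothesis. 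Your phrasing conflates ``the value is invariant'' with ``the function is symmetric'', and writing $\tilde R$ explicitly is what bridges that gap. (Your alternative of symmetrizing numerator and denominator separately is not quite safe: the symmetrized denominator can vanish, as with $D=x_0-x_1$. Averaging $R$ itself avoids this.)

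Finally, the remark about coefficients in $\GK[[z]]$ is out of place: the lemma is a purely algebraic statement over $\qs(u)$; no power series in $z$ appear.
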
 
\begin{proof} 
  Let $\tilde R$ be the following rational function in $x_0, \ldots, x_m$:
$$
\tilde R (x_0, \ldots, x_m)= \frac 1 {(m+1)!}
\sum_{\sigma  \in \Sn_{m+1}} R(x_{\sigma(0)},
\ldots, x_{\sigma(k)}).
$$
Then $\tilde R$ is a symmetric function of $x_0, \ldots, x_m$, and
hence a rational function in the elementary symmetric functions
$e_j(x_0, \ldots, x_m)$, say $S(e_1(x_0, \ldots, x_m),  \ldots, e_{m+1}(x_0, \ldots, x_m))$. 
By assumption, 
$$
\tilde R (u_0, \ldots, u_m)=S(e_1(u_0, \ldots, u_m),  \ldots,
e_{m+1}(u_0, \ldots, u_m))= R(u_0, \ldots, u_k). 
$$
  Since $S$ is a rational function, it follows from the first part of
  Lemma~\ref{lem:elem} that $ R(u_0, 
  \ldots, u_k)$ can be written as a rational function in $v$.
\end{proof}

\begin{proof}[Proof of Proposition~{\rm\ref{prop:extraction}}]
  We prove~\eqref{eq-sym-k} by descending induction on $k$. For
  $k=m$,~\eqref{eq-sym-k} holds by assumption when $\sigma$ is the
  identity, and actually for any $\sigma$ as $H_m(x_0, \ldots, x_m)$ is a
  symmetric function of the $x_i$'s. Let us assume~\eqref{eq-sym-k}
  holds for some $k>0$, and
  prove it for $k-1$.

Observe that
$$
(A(x_{k-1})-A(x_k))H_k(x_0, \ldots, x_k)= H_{k-1}(x_0, \ldots, x_{k-2}, x_{k-1})-
H_{k-1}(x_0, \ldots, x_{k-2}, x_{k}).
$$
This is easily proved by collecting the coefficient of $H(x_i)$, for
all $i\in\llbracket 0, k\rrbracket$, in both sides of the equation. We also have, for any
indeterminates $a_0, \ldots, a_m$,
$$
(a_{k-1}-a_k)h_{j-k}(a_0, \ldots, a_k)= h_{j-k+1}(a_0, \ldots,
a_{k-2},a_{k-1})-h_{j-k+1}(a_0, \ldots, a_{k-2},a_{k}).
$$
Hence, multiplying~\eqref{eq-sym-k} by $(A_{\sigma({k-1})}-A_{\sigma(k)})$ gives
\begin{multline*}
  H_{k-1}(u_{\sigma(0)}, \ldots, u_{\sigma(k-2)},u_{\sigma(k-1)})
-\sum_{j=k}^m \Phi_j(v) h_{j-k+1}(A_{\sigma(0)}, \ldots, A_{\sigma(k-2)},A_{\sigma(k-1)})
=\\
H_{k-1}(u_{\sigma(0)}, \ldots, u_{\sigma(k-2)},u_{\sigma(k)})
-\sum_{j=k}^m \Phi_j(v) h_{j-k+1}(A_{\sigma(0)}, \ldots, A_{\sigma(k-2)},A_{\sigma(k)})
,
\end{multline*}
for all $\sigma\in\Sn_{m+1}$.
This implies that the series
$$
H_{k-1}(x_0, \ldots,  x_{k-1})-\sum_{j=k}^m \Phi_j(v)
h_{j-k+1}(A(x_0), \ldots,A(x_{k-1})) 
$$
takes the same value at all points $(u_{\sigma(0)},
\ldots,u_{\sigma(k-1)})$, with $\sigma\in \Sn_{m+1}$. 
Hence, 
by Lemma~\ref{lem:sym}, there exists a series in $z$ with \emm rational,
coefficients in $v$, denoted $\Phi_{k-1}(v)$, such that for all
$\sigma\in \Sn_{m+1}$, 
\beq\label{eq-sym-k-1}
H_{k-1}(u_{\sigma(0)}, \ldots,u_{\sigma(k-1)})
-\sum_{j=k}^m \Phi_j(v) h_{j-k+1}(A_{\sigma(0)}, \ldots,
A_{\sigma(k-1)})= \Phi_{k-1}(v).
\eeq
This is exactly~\eqref{eq-sym-k} with $k$ replaced by $k-1$.

\medskip
The next point we will prove is that the coefficients of
$\Phi_{k-1}$ belong to $v \GK[v]$. In order to do so, we
symmetrize~\eqref{eq-sym-k-1} over $u_0, \ldots, u_m$.
 For any subset
$V=\{i_1, \ldots, i_k\}$ of $\{0, \ldots, m\}$, of cardinality $k$, we
denote $u_V=( u_{i_1}, \ldots, u_{i_k})$ and similarly $A_V=( A_{i_1},
\ldots, A_{i_k})$. By~\eqref{eq-sym-k-1},
\beq\label{e-neg:a}
{m+1\choose k} \Phi_{k-1}(v)
=\sum_{V\subset \{0, \ldots, m\}, |V|=k} H_{k-1}(u_V)-\sum_{j=k}^m
\left(\Phi_j(v) \sum_{V\subset \{0, \ldots, m\}, |V|=k}h_{j-k+1}(A_V) \right).
\eeq
We will prove that both sums in the right-hand side of this equation
are series in $z$ with coefficients in $v\GK[v]$.

Denote $x_V=( x_{i_1}, \ldots, x_{i_k})$. Observe that
$$
\sum_{V\subset \{0, \ldots, m\}, |V|=k} H_{k-1}(x_V)
$$
is a series in $z$ with polynomial
coefficients in $x_0, \ldots, x_m$, which is symmetric in these
variables (Lemma~\ref{lem:Hk}). By Lemma~\ref{lem:elem}, the first sum
in~\eqref{e-neg:a} is thus 
a series in $z$ with \emm polynomial, coefficients in $v$. We still
need to prove that this series even vanishes at $v=0$, that is, at
$u=-1$. But this follows from Lemma~\ref{lem:Hk}, since $u_i=-1$ for all $i$
when $u=-1$.

 Let us now consider the second sum in~\eqref{e-neg:a}, and more
specifically the term
\beq\label{second-sum:a}
\Phi_j(v)  \sum_{V\subset \{0, \ldots, m\}, |V|=k}h_{j-k+1}(A_V) .
\eeq
Recall that $$
A_i= \frac{u_i}{1+u_i}\, {e^{-zu_i}}.
$$
But by Lemma~\ref{lem:elem},
$$
\frac 1 {1+u_i}= \frac 1 v \prod_{0\le j \not = i \le m} (1+u_j).
$$
Hence~\eqref{second-sum:a} can be written as a series in $z$ with
coefficients in $\GK[1/v, u_0, \ldots, u_m]$, symmetric in
$u_0, \ldots, u_m$. By the first part of Lemma~\ref{lem:elem}, these
coefficients belong to 
$\GK[v, 1/v]$. We want to prove that they actually
belong to $v\GK[v]$, that is, that they are not singular at
$v=0$ (equivalently, at $u=-1$) and even vanish at this point.
>From the equation $(1+u_i)^{m+1}= vu_i^m$, it follows that
we can label $u_1, \ldots, u_m$ in such a way
$$
1+u_i= \xi^i(1+u)+o(1+u),
$$
where $\xi$ is a primitive $(m+1)^{\hbox{st}}$ root of unity. Since
$\Phi_j(v)$ is a multiple of $v=\bu^m(1+u)^{m+1}$, and the
symmetric function $h_{j-k+1}$ has degree $j-k+1 \le m$, it follows
that the series~\eqref{second-sum:a} is not singular at $u=-1$, and even
vanishes at this point. Hence its coefficients  belong to $v\GK[v]$.

\medskip
We finally want to obtain an explicit expression  of
$\Phi_{k-1}(v)$. Lemma~\ref{P-reconstruct}, together with $\Phi_{k-1}(0)=0$,
establishes~\eqref{Phi-k}. To express $\Phi^>_{k-1}(u)$, we now
symmetrize~\eqref{eq-sym-k-1} over $u_1, \ldots, u_m$. With the
above notation, 
\beq\label{e-neg}
{m\choose k} \Phi_{k-1}(v)=
\sum_{V\subset \{1, \ldots, m\}, |V|=k} H_{k-1}(u_V)-\sum_{j=k}^m
\left(\Phi_j(v) \sum_{V\subset \{1, \ldots, m\}, |V|=k}h_{j-k+1}(A_V)
\right).
\eeq
As above,
$$
\sum_{V\subset \{1, \ldots, m\}, |V|=k} H_{k-1}(x_V)
$$
is a series in $z$ with polynomial
coefficients in $x_1, \ldots, x_m$, which is symmetric in these
variables. By the second part of Lemma~\ref{lem:elem}, the first sum in~\eqref{e-neg} is thus
a series in $z$ with polynomial coefficients in $1/u$. Since
$\Phi_{k-1}(v)$ has coefficients in $\GK[v]$, and hence in
$\GK[u,1/u]$, the second sum in~\eqref{e-neg} is also a zeries in
$z$ with coefficients in $\GK[u,1/u]$. We can now extract from~\eqref{e-neg} 
 the positive part in $u$, and this gives
$$
{m\choose k} \Phi^>_{k-1}(u)= -[u^{>}]\left(\sum_{j=k}^m
\left(\Phi_j(v) \sum_{V\subset \{1, \ldots, m\}, |V|=k}h_{j-k+1}(A_V)
\right)\right).
$$
One easily checks that, for indeterminates $a_1, \ldots, a_m$, 
$$
\sum_{V\subset \{1, \ldots, m\}, |V|=k}h_{j-k+1}(a_V)=
 \sum_{\lambda \vdash j-k+1} {m-\ell(\lambda)
  \choose k -\ell(\lambda)} m_\lambda(a_1, \ldots, a_m),
$$
so that the above expression of $\Phi^>_{k-1}(u)$ coincides
with~\eqref{phi-rec}. 
\end{proof}

\subsection{The case $y=1$}
As explained in Section~\ref{sec:principle},
Theorem~\ref{thm:main} will be proved if we establish
$\tG(u,1)=G_1(u)$, where  
$$
G_1(u)= (1+u)e^{(m+1)z-(m-1)zu}\left(1+\frac{1-e^{mzu}}u\right).
$$
A natural attempt would be to set $y=1$ in the expression of
$\tG(u,y)$ that can be derived from Proposition~\ref{prop:extraction},
as we did when $m=1$ in Section~\ref{sec:m=y=1}. However, we have not been able
to do so, and will proceed differently.

We have proved in Proposition~\ref{prop:combi-lin-m} that the series
$\tG(u,y)$ 
satisfies~\eqref{eq-sym} with $\Phi_m(v)=v e^{zvy}$. In particular, $\tG(u,1)$
satisfies~\eqref{eq-sym} with $\Phi_m(v)=v e^{zv}$. By
Proposition~\ref{prop:extraction}, this 
equation, together with the initial condition $\tG(-1,1)=0$,
characterizes $\tG(u,1)$. 
It is clear that  $ G_1(-1)=0$. Hence it suffices  to prove is the
following proposition. 

\begin{Proposition}
  The series $G_1(u)$ satisfies~\eqref{eq-sym}  with $\Phi_m(v)=v e^{zv}$. 
\end{Proposition}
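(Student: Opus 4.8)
The plan is to verify directly that $G_1(u)$, evaluated at the roots $u_0=u,u_1,\dots,u_m$ of \eqref{poluv-bis}, satisfies the symmetric identity \eqref{eq-sym} with right-hand side $v e^{zv}$; that is, to show
$$
\sum_{i=0}^m \frac{G_1(u_i)}{\prod_{j\ne i}(A_i-A_j)} = v e^{zv},
$$
where $A_i=A(u_i)=\tfrac{u_i}{1+u_i}e^{-zu_i}$ and $v=(1+u)^{m+1}u^{-m}$. The first step is to rewrite $G_1(u)$ in a form adapted to this sum. Using $1+\tfrac{1-e^{mzu}}{u}=\tfrac{1+u-e^{mzu}}{u}$ and pulling the exponential factors together, one gets
$$
G_1(u)=\frac{1+u}{u}\,e^{(m+1)z-(m-1)zu}\bigl(1+u-e^{mzu}\bigr)
= \frac{1+u}{u}\Bigl(e^{(m+1)z}(1+u)e^{-(m-1)zu}-e^{(m+1)z+zu}\Bigr).
$$
The key observation is that both terms should be expressible through powers of $A(u)$: indeed $A(u)^k=\tfrac{u^k}{(1+u)^k}e^{-kzu}$, so $(1+u)^{1-k}u^{k-1}A(u)^{k}=e^{-kzu}$ up to the obvious bookkeeping, and on the curve $(1+u_i)^{m+1}=v u_i^m$ the prefactors $(1+u_i)^a u_i^b$ collapse to powers of $v$ whenever $a,b$ are in the right ratio. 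So I would express $G_1(u)$ as $\sum_{j=0}^m \psi_j(v)\,A(u)^j$ for suitable scalar series $\psi_j(v)$ (a finite expansion, since $G_1$ is polynomial in $u$ of controlled degree), using the relations of Lemma~\ref{lem:elem}.

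Once $G_1(u)=\sum_{j=0}^m \psi_j(v)A(u)^j$ is established, apply $H\mapsto H_m$ termwise: by Lemma~\ref{lemma:Lagrange}, $\sum_{i=0}^m A_i^j/\prod_{j'\ne i}(A_i-A_{j'})$ equals $0$ for $j<m$ and $1$ for $j=m$ (equation \eqref{eq:Lagrange-inv} and \eqref{eq:Lagrange-poly}), so that $H_m=\psi_m(v)$. Hence the whole computation reduces to showing $\psi_m(v)=v e^{zv}$, i.e. that the coefficient of $A(u)^m$ in the expansion of $G_1(u)$ is exactly $v e^{zv}$. Concretely, since $A(u)^m=\tfrac{u^m}{(1+u)^m}e^{-mzu}$ and $(1+u)=v^{1/(m+1)}u^{m/(m+1)}$-type relations hold only after using the defining equation, the cleanest route is: expand $G_1(u)$ as a polynomial in $u$, rewrite the top-degree behaviour in terms of $v$, and track the single term that carries the factor $e^{-mzu}\cdot(\text{power of }u/(1+u))$ matching $A(u)^m$; the term $e^{(m+1)z+zu}$ in $G_1$ must recombine with the $v$-prefactor to produce $e^{zv}$. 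This last recombination — recognising that $e^{(m+1)z}\cdot e^{zu}$, after multiplying by the appropriate symmetric prefactor forced to be a power of $v$, becomes $v e^{zv}$ — is where the miracle of the change of variables \eqref{t-x-param} ($t=ze^{-m(m+1)z}$) is really used, and it is the step I expect to require the most care.

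An alternative, and perhaps safer, route avoids guessing the expansion and instead shows that $G_1(u)$ satisfies the \emph{same} characterizing data as $\tG(u,1)$. We already know $G_1(-1)=0$. So it would suffice to show that $G_1$ satisfies a first-order ODE in some auxiliary variable, or more directly, to verify \eqref{eq-sym} by checking it is equivalent (via Proposition~\ref{prop:extraction}, run with $\Phi_m(v)=ve^{zv}$) to the explicit formula $G_1(u)=\sum_{j=0}^m\Phi_j(v)A(u)^j$ with the $\Phi_j$ produced by the descending recursion \eqref{phi-rec}–\eqref{Phi-k}; but that recursion is unwieldy, so I would not pursue it. Instead I would carry out the direct verification above. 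The main obstacle, to be explicit, is purely computational: one must expand $G_1(u)$, identify its image under $H\mapsto H_m$ using the Lagrange-interpolation lemma, and confront the resulting scalar series with $ve^{zv}$ — the algebra of converting products of $(1+u_i)$ and $u_i$ into powers of $v$ (Lemma~\ref{lem:elem}) and matching exponentials is delicate but routine. If the naive expansion of $G_1$ is not literally of the form $\sum_{j=0}^m\psi_j(v)A(u)^j$ with polynomial-in-$v$ coefficients, one first splits off the part that is a Laurent polynomial of non-positive degree in $u$ (which contributes $0$ to $H_m$ after the symmetrization, by the vanishing in Lemma~\ref{lemma:Lagrange}), exactly as in the $m=1$ computation of Section~\ref{sec:m=y=1}.
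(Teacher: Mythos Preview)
Your overall strategy --- express $G_1(u)$ in terms of powers of $A(u)$ and then apply Lemma~\ref{lemma:Lagrange} --- is exactly right, but you have misidentified the shape of the expansion, and this misidentification is not a detail: it is where the whole proof lives.

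You propose to write $G_1(u)=\sum_{j=0}^m \psi_j(v)\,A(u)^j$ and then read off $\psi_m(v)$ as the value of $H_m$. But a direct simplification of~\eqref{tG1} gives
\[
G_1(u)=e^{(m+1)z}\left(v\,A(u)^{m-1}-\frac{1}{A(u)}\right),
\]
which you can check in one line using $v\,A(u)^{m-1}=\tfrac{(1+u)^2}{u}e^{-(m-1)zu}$ and $A(u)^{-1}=\tfrac{1+u}{u}e^{zu}$. So the natural expansion of $G_1$ in powers of $A$ has only \emph{two} terms, and one of them is a \emph{negative} power. In particular there is no $A(u)^m$ term at all: your plan to ``track the single term that carries the factor $e^{-mzu}$ matching $A(u)^m$'' would find nothing, and your conclusion $H_m=\psi_m(v)$ would yield $0$, not $ve^{zv}$.

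The paper exploits precisely this two-term form. The term $e^{(m+1)z}v\,A(u)^{m-1}$ contributes $0$ to $H_m$ by~\eqref{eq:Lagrange-poly}, since $m-1<m$. The term $-e^{(m+1)z}A(u)^{-1}$ contributes, by~\eqref{eq:Lagrange-inv-bis},
\[
(-1)^{m+1}e^{(m+1)z}\prod_{i=0}^m A_i^{-1}
=(-1)^{m+1}e^{(m+1)z}\,e^{z\sum_i u_i}\prod_{i=0}^m\frac{1+u_i}{u_i}
=ve^{zv},
\]
using Lemma~\ref{lem:elem} (namely $\sum_i u_i=v-(m+1)$, $\prod_i(1+u_i)=v$, $\prod_i u_i=(-1)^{m+1}$). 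So the ``miracle'' you anticipate --- the recombination of $e^{(m+1)z+zu}$ into $e^{zv}$ --- is not a matching of an $A^m$-coefficient; it is the evaluation of $\prod_i A_i^{-1}$ via the identity~\eqref{eq:Lagrange-inv-bis} for $1/x_i$, which you did not invoke. Your final fallback (``split off the part of non-positive degree in $u$, which contributes $0$'') is also off the mark: the Lagrange identities concern powers of $A_i$, not of $u_i$, and it is exactly the $A^{-1}$ piece that carries the entire answer.
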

\begin{proof}
Note that $G_1(u) = e^{(m+1)z} \left(vA(u)^{m-1}-\frac{1}{A(u)}
\right)$. Using  Lemma~\ref{lemma:Lagrange} with $x_i=A_i$, it follows that
\begin{eqnarray*}
\sum_{i=0}^m \frac{G_1(u_i)}{\prod_{j\neq i} (A_i-A_j)} &=& 
0 + (-1)^{m+1} e^{(m+1)z} \prod_{i=0}^m \frac{1}{A_i}
\quad\quad\mbox{ (by }\eqref{eq:Lagrange-inv-bis}
\mbox{ and } \eqref{eq:Lagrange-poly}\mbox{)}\nonumber \\
&=&
(-1)^{m+1} e^{(m+1)z+z\sum_i u_i} \prod_{i=0}^m \frac{(1+u_i)}{u_i}
\\
&=&ve^{zv}
\end{eqnarray*}
by Lemma~ \ref{lem:elem}.
\end{proof}

\subsection{The trivariate series}
 We have now proved that $\tG(u,y)=G(u,y)$, so that
$F(x,y)=\tG(u,y)$ after the change of variables~\eqref{t-x-param}. As
 shown  in Proposition~\ref{prop:combi-lin-m}, the series
$\tG(u,y)$ 
satisfies~\eqref{eq-sym} with $\Phi_m(v)=v e^{zvy}$. Hence
Proposition~\ref{prop:extraction} gives an explicit, although complicated,
expression of the trivariate series $F(t,x,y)$.

\begin{Theorem}\label{thm:trivariate}
Let  $F^{(m)}(t;x,y)\equiv F(t;x,y)$ be the exponential generating
function of labelled $m$-Tamari intervals, defined by~\eqref{F-def}. Let $z$ and
$u$ be two indeterminates, and write 
$$
t=z e^{-m(m+1)z}
\quad \hbox{and } \quad x=({1+u})e^{-mzu}.
$$
Then $F(t;x,y)$ becomes a series in $z$ with polynomial coefficients
in $u$ and $y$, and this series can be computed by an iterative
extraction of positive parts. More precisely,
$$
F(t;x,y)= \sum_{k=0}^m \Phi_k(v) A(u)^k,
$$
where $v=u^{-m}(1+u)^{m+1}$, $A(u)$ is defined by~\eqref{A-def}, and 
   $\Phi_k(v)\equiv \Phi_k(z;v)$ is a series in $z$ with polynomial
coefficients in $v$. This series can be computed by a
descending induction on $k$ as follows. First,
$\Phi_m(v)=ve^{zyv}$. Then for $k\le m$,
$$
\Phi_{k-1}(v)= \sum_{i=0}^m \left(  \Phi_{k-1}^>(u_i)-  \Phi_{k-1}^>(-1)\right)
$$
where 
\begin{eqnarray*}
\Phi_{k-1}^>(u)&=&[u^>]\Phi_{k-1}(v)\\
&=&
-\frac 1{{m\choose k} }[u^{>}] \left(\sum_{j=k}^m \Phi_j(v) \sum_{\lambda \vdash j-k+1} {m-\ell(\lambda)
  \choose k -\ell(\lambda)} m_\lambda(A(u_1), \ldots, A(u_m))\right),
\end{eqnarray*}
and $u_0=u, u_1, \ldots, u_m$ are the $m+1$ roots of the equation
$(1+u_i)^{m+1}=u_i^m v$. 
\end{Theorem}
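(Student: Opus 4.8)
The statement is a corollary of Propositions~\ref{prop:combi-lin-m} and~\ref{prop:extraction}, once the series $F$ has been identified with $\tG$. The three steps are: identify $F$ with $\tG$; record the symmetric-function equation~\eqref{eq-sym} that $\tG$ satisfies; feed it into Proposition~\ref{prop:extraction} to read off the explicit expression.

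First I would recall that the previous subsections have established $\tG(z;u,y)=G(z;u,y)=F(t;x,y)$ after the substitution~\eqref{t-x-param} (the key point being $\tG(u,1)=G_1(u)$, obtained by using Proposition~\ref{prop:extraction} to reduce the identity to the claim that $G_1$ satisfies~\eqref{eq-sym} with $\Phi_m(v)=ve^{zv}$, checked via Lemma~\ref{lemma:Lagrange} and Lemma~\ref{lem:elem}; then the principle of Section~\ref{sec:principle} applies). In particular $F$, regarded as a series in $z$, has polynomial coefficients in $u$ and $y$, since $\tG$ does by its very construction from~\eqref{eq:Gtilde} and~\eqref{init:Gtilde}; this part of the statement need not be re-proved.

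Next I would invoke Proposition~\ref{prop:combi-lin-m}: the series $\tG(u,y)$ satisfies~\eqref{eq-sym} with right-hand side $\Phi_m(v)=ve^{zvy}$. I also need the boundary condition $\tG(-1,y)=0$; this is read off~\eqref{eq:Gtilde}, in the form~\eqref{eq:Gy-poly}: its right-hand side is divisible by $v=(1+u)^{m+1}\bu^m$, hence vanishes at $u=-1$, while the initial condition~\eqref{init:Gtilde} gives $\tG(-1,0)=0$. Finally I would apply Proposition~\ref{prop:extraction} with $H(u)=F(t;x,y)$ and $\GK=\qs(y)$. Its hypotheses hold: $H$ has coefficients in $\qs[u,y]\subseteq\GK[u]$; $H(-1)=0$; and $\Phi_m(v)=ve^{zvy}=\sum_{n\ge 0}\frac{(zy)^n}{n!}\,v^{n+1}$ is a series in $z$ with coefficients in $v\GK[v]$. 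The proposition then produces the unique sequence $\Phi_0,\dots,\Phi_m$ with coefficients in $v\GK[v]$ — in particular polynomial in $v$, as asserted — the closed form $F=\sum_{k=0}^m\Phi_k(v)A(u)^k$, and the descending recursion: $\Phi_m(v)=ve^{zyv}$ to start, then~\eqref{Phi-k} and~\eqref{phi-rec} (with the identification $A_i=A(u_i)$), which is precisely the displayed recursion in the statement, once one substitutes the monomial-symmetric-function expansion of $\sum_{|V|=k}h_{j-k+1}(A_V)$ recorded inside the proof of Proposition~\ref{prop:extraction}.

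\textbf{Main obstacle.} There is essentially no new difficulty: everything substantial has already been absorbed into Propositions~\ref{prop:combi-lin-m} and~\ref{prop:extraction}, and into the $y=1$ computation that pins down $\tG$. The only points needing a moment's care are checking that Proposition~\ref{prop:extraction} applies verbatim — i.e. that $ve^{zvy}$ genuinely has coefficients in $v\GK[v]$ with $\GK=\qs(y)$, immediate from its series expansion — and that the boundary condition $\tG(-1,y)=0$ is at hand.
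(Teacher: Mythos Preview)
Your proposal is correct and follows the same route as the paper: identify $F$ with $\tG$ via the $y=1$ argument and the principle of Section~\ref{sec:principle}, invoke Proposition~\ref{prop:combi-lin-m} to obtain~\eqref{eq-sym} with $\Phi_m(v)=ve^{zvy}$, and then apply Proposition~\ref{prop:extraction}. Your explicit verification of the hypotheses ($H(-1)=0$ and $\Phi_m(v)\in v\GK[v][[z]]$ with $\GK=\qs(y)$) is a welcome addition that the paper leaves implicit.
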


\noindent{\bf Remark and examples}\\
The case $k=1$ of the above identity gives
\beq\label{phi0}
[u^>] \Phi_{0}(v)= -\frac 1{m }[u^{>}] \left(\sum_{j=1}^m \Phi_j(v)
\sum_{i=1}^m A(u_i)^j\right).
\eeq
Recall that  $F(t;x,y)=\tG(z;u,y)$ has polynomial coefficients in $u$ and $y$. Hence
\begin{eqnarray*}
  F(t;x,y)&=&F(t;1,y)+ [u^>] \left( \sum_{k=0}^m \Phi_k(v)A(u)^k\right) \hskip 10mm
(\mbox{since } u=0 \mbox{ when } x=1)
\\
&=& F(t;1,y)+ [u^>] \left( \sum_{k=1}^m \Phi_k(v)\left( A(u)^k -\frac 1{m }
\sum_{i=1}^m A(u_i)^k\right)\right) \hskip 10mm
(\mbox{by } \eqref{phi0})
\\
&=&
({1+u}) [u^{\ge }]\left(\sum_{k=1}^m \frac{\Phi_k(v)}{1+u} \left(
  A(u)^k-\frac 1{m }\sum_{i=1}^m A(u_i)^k\right)\right)
\end{eqnarray*}
by~\eqref{1+u}, and given that $F(t;x,y)=0$ when $u=-1$.

\smallskip
\noindent $\bullet$
When $m=1$, this is the expression~\eqref{F-param-y1} of
$F^{(1)}(t;x,y)$ (recall that  $\Phi_m= ve^{zyv}$).

\noindent $\bullet$
When $m=2$, the \gf\ of labelled $2$-Tamari intervals satisfies
$$
 \frac{F^{(2)}(t;x,y)}{1+u}=[u^{\ge }]\left(\frac{\Phi_1(v)}{1+u} \left(
  A(u)-\frac {A(u_1)}{2 }-\frac {A(u_2)}{2 }\right)
+(1+\bu)^2e^{zyv} \left(
  A(u)^2-\frac {A(u_1)^2}{2 }-\frac {A(u_2)^2}{2 }
\right)\right),
$$
where 
$$
A(u)=\frac{u}{1+u}e^{-zu},\quad \quad 
u_{1,2}=\frac{1+3u\pm (1+u)\sqrt{1+4u}}{2u^2},
$$
and
$$
\Phi_1(v)=   \Phi_{1}^>(u)+\Phi_{1}^>(u_1)+\Phi_{1}^>(u_2)- 3\Phi_{1}^>(-1),
$$
with
$$
\Phi_1^>(u)=-[u^>]\left( (1+u)^3\bu^2e^{zy(1+u)^3\bu^2}\left(A(u_1)+A(u_2)\right)\right).
$$
This expression has been checked with {\sc Maple}, after computing
the first coefficients of $F(t;x,y)$ from the functional equation~\eqref{eq:Fb}.

\section{Final comments}
\label{sec:final}
\noindent{\bf A constructive proof?} Our proof would not
have been possible without a preliminary task consisting in  \emm guessing,  the
expression of $F(t;x,1)$.  This turned out to be difficult, in particular
because the standard tools like the {\sc Maple} package {\sc Gfun} can only guess D-finite
\gfs, while the \gf\ of the numbers~\eqref{number} is not
D-finite. 
More precisely, the expression of $F(t;x,1)$ \emm becomes
D-finite, after the change of variables~\eqref{t-x-param}, but what is hard 
to guess is this change of variables.
A constructive proof of our result would be most welcome.

\medskip
\noindent{\bf A $q$-analogue of the functional equation.} As described in the introduction, the numbers~\eqref{number} are
conjectured to give the dimension  of certain polynomial rings
generalizing $\DR_{3  ,n}$. These rings are tri-graded (with respect to
the sets of variables 
$\{x_i\}$, $\{y_i\}$ and $\{z_i\}$), 
 and it is conjectured~\cite{bergeron-preville}
that the dimension of the homogeneous component in the $x_i$'s of degree $k$ 
is the number of labelled intervals $[P,Q]$ in $\cT_{n}^{(m)}$ such that the
longest chain from $P$ to $Q$, in the Tamari order, has length
$k$. One can recycle the recursive description of intervals described
in Section~\ref{sec:eq} to generalize the functional equation of
Proposition~\ref{prop:eq}, taking into account (with a new variable $q$) this
distance. Eq.~\eqref{eq:Fb} remains valid, upon defining the operator
$\Delta$ by
$$
\Delta S(x)=\frac{S(qx)-S(1)}{qx-1}.
$$
The coefficient of $t^n$ in the series $F(t,q;x,y)$ does not seem to
factor, even when $x=y=1$. The coefficients of the bivariate series
$F(t,q;1,1)$ have large prime factors.

\medskip

\noindent{\bf Further developments.} 
There is a natural action of the symmetric group $\Sn_n$ on labelled $m$-Tamari
intervals of size $n$: it consists in permuting the labels according
to the permutation one considers, and then to rearrange the labels in
each sequence of consecutive steps so that  they increase. The
dimension of this representation of $\Sn_n$  is the
number~\eqref{number} of labelled  $m$-Tamari intervals of size
$n$. Bergeron and Préville Ratelle have a refined conjecture that
gives the \emm character, of this representation\cite{bergeron-preville}.
 We have very recently proved this conjecture~\cite{mbm-chapuy-preville-prep}. 

\spacebreak

\bigskip
\noindent{\bf Acknowledgements.} We are grateful to François Bergeron
for advertising in his  lectures the conjectural interpretation of the
numbers~\eqref{number} in terms of labelled Tamari intervals.
   We also thank \'Eric Fusy and Gilles Schaeffer   for
interesting discussions on this topic, and  thank \'Eric once more for
allowing us to reproduce some figures
of~\cite{bousquet-fusy-preville}. 
%

\bibliographystyle{plain}
\bibliography{tamar.bib}

\spacebreak

\end{document}